\title{Hamming weights and Betti numbers of Stanley-Reisner rings associated to matroids\thanks{The original publication is available at http://link.springer.com/article/10.1007/s00200-012-0183-7}}
\author{Trygve Johnsen\thanks{ Dept. of Mathematics, University of Troms{\o}, N-9037 Troms{\o}, Norway, \texttt{Trygve.Johnsen@uit.no}} \and Hugues Verdure\thanks{ Dept. of Mathematics, University of Troms{\o}, N-9037 Troms{\o}, Norway, \texttt{Hugues.Verdure@uit.no}} }
\newtheorem{definition}{Definition}[section]
\newtheorem{remark}{Remark}[section]
\newtheorem{lemma}{Lemma}[section]
\newtheorem{corollary}{Corollary}[section]
\newtheorem{proposition}{Proposition}[section]
\newtheorem{theorem}{Theorem}[section]
\newtheorem{example}{Example}[section]
\newenvironment{proof}[1][Proof]{\begin{trivlist}
\item[\hskip \labelsep {\bfseries #1}]}{\end{trivlist}}
\newcommand{\qed}{\nobreak \ifvmode \relax \else
      \ifdim\lastskip<1.5em \hskip-\lastskip
      \hskip1.5em plus0em minus0.5em \fi \nobreak
      \vrule height0.75em width0.5em depth0.25em\fi}
\begin{document}

\maketitle

\begin{abstract}\noindent 
To each linear code $C$ over a finite field we associate the matroid $M(C)$ of its parity check matrix. For any matroid $M$ one can define its generalized Hamming weights, and if a matroid is associated to such a parity check matrix, and thus of type $M(C)$, these weights are the same as those of the code $C$. In our main result we show how the weights $d_1,\cdots,d_k$ of a matroid $M$ are determined by the $\mathbb{N}$-graded Betti numbers of the Stanley-Reisner ring of the simplicial complex whose faces are the independent sets of $M$, and derive some consequences. We also give examples which give negative results concerning other types of (global) Betti numbers, and using other examples we show that the generalized Hamming weights do not in general determine the $\mathbb{N}$-graded Betti numbers of the Stanley-Reisner ring. The negative examples all come from matroids of type $M(C)$.\\
\noindent
Keywords: Codes, Matroids, Stanley-Reisner rings\\
\noindent
2010 Mathematics Subject Classification. 05E45 (94B05, 05B35, 13F55)
\end{abstract}

\newcommand{\Proj}{\mathbb{P}}
\newcommand{\N}{\mathbb{N}}
\renewcommand{\k}{\Bbbk}
\newcommand{\Fq}{\mathbb{F}_q}
\newcommand{\K}{\mathbb{K}}
\newcommand{\I}{\mathcal{I}}
\newcommand{\B}{\mathcal{B}}
\newcommand{\F}[1]{\mathbb{F}_{#1}}

%%%%%%%%%%%%%%%%%%%%%%%%%%%%%%%%%%%%%%%%%%%%%%%%%%%%%%%%%%%%%%%%%%%%%%%%%%%%%%%%%%%%%%%%%%%%%%%%%%%%%%%%%%%%
%
%  INTRODUCTION
%
%%%%%%%%%%%%%%%%%%%%%%%%%%%%%%%%%%%%%%%%%%%%%%%%%%%%%%%%%%%%%%%%%%%%%%%%%%%%%%%%%%%%%%%%%%%%%%%%%%%%%%%%%%%

\section{Introduction}

Let $\Fq$ be a finite field. A linear code $C$ is a linear subspace of $\Fq^n$  for some $n \in{\N}$. We usually denote the dimension of the code by $k$ (and it can be defined as $k = log_q |C|$ also for non-linear block codes). Such a code is called a $[n,k]$-code over $\Fq$. For $h = 1, 2,\ldots, k$ let $D_h$ be the set of all linear subspaces of the linear code $C$ of dimension $h$, and let \[d_h = \min \{|\textrm{Supp}(E)| :\ E \in D_h\}.\]
As usual  $d_1$ can be identified with the minimum distance \[d = \min_{{\bf x}, {\bf y} \in C, {\bf x} \neq {\bf y}}d({\bf x}, {\bf y}),\] of the code $C$, where $d({\bf x}, {\bf y})$  is the usual Hamming distance. One aim in coding theory is to maximise $d$ given $q, n, k$. In the processes of trellis decoding, (or in certain methods in cryptology, using generator/parity check matrices of $C$ instead as a starting point), it is interesting to study and maximise $d_h$ also for higher values of $h$. Thus a full determination of the code parameters for a linear code over $\Fq$ can be said to involve finding $n,k, d_1,\ldots,d_k$.

These parameters are completely determined by the underlying matroid structure of the code. All generator matrices $G$ for the code determine the same finite matroid $M_G$ of rank $k$ and cardinality $n$ for an $[n,k]$-code $C$, while all parity check matrices $H$ determine the same matroid $M_H$ of rank $r=n-k$ and cardinality  $n$. We shall say that the matroid associated to the code is $M(C)=M_H$ where $H$ is a parity check matrix. This is independent of the choice of $H$.

Given a parity check matrix $H$ it is a well established fact that: $d_h$ is the minimum number $s$, such that there are $s$ columns of $H$ that form a submatrix of rank $s-h$. 

It is a well known fact that the matroids $M_H$ and $M_G$ are matroid duals: hence $M(C)$ and $M(C^\perp)$ are matroid dual (and determine each other), where $C^\perp$ is the orthogonal complement of $C$. Furthermore the weight hierarchy $d_1,\ldots,d_k$ of $C$ determines the weight hierarchy $d_1^\perp,\ldots,d_r^\perp$ of $C^\perp$ by Wei duality (See \cite{W}), and vice versa.

In this article we show to what extent the code parameters (in particular the generalized Hamming weights) of a linear code are determined by the various sets of Betti numbers one can associate to simplicial complexes derived from the underlying  matroid $M(C)$, through matroid and Alexander duality.

The aim of the paper is to show that the information embedded in a minimal $\N$-graded resolution of the Stanley-Reisner ring associated to a matroid/code contains its weight hierarchy in a non-trivial manner. When this resolution is indeed computable, one gets the higher weights $d_i$ for free also. In this way we want to build a bridge between the extensive activity within combinatorial algebra, around simplicial complexes and  monomial ideals, on one hand, and coding theory on the other, where the importance of generalized Hamming weights of linear codes is well established.  The concept which connects these seemingly different activities is that of matroids, and the results that we give, in particular Theorem \ref{main}, are therefore formulated for matroids, which (by now) is the most general setting for such results. We don't, however, claim having found a cheaper way to compute the higher weights for linear codes than by using the standard techniques for doing so. See the subsection below on computational complexity, where we analyze and compare with previous results on complexity.

\subsection{Structure of the paper}

In Section 2 we recall the standard definitions of Stanley-Reisner rings of simplicial complexes, their resolutions and Betti numbers of various kinds. We also recall the standard definition of matroids. In addition we describe their nullity functions and define the higher weights of matroids, which is a generalization of higher weights of codes.

In Section 3 we define sets of so-called non-redundant circuits of matroids $(E,\I)$ and use properties of matroids to identify the nullity of a subset of $E$ with the maximal number of non-redundant circuits contained in it.

In Section 4 we give our main results, both in positive and negative direction. The first part of this section explains how the higher weights of a matroid are determined by the $\N$-graded Betti numbers of its associated Stanley-Reisner ring. In a second part, we give examples showing that the converse doesn't hold, and that it doesn't hold for global Betti numbers either. Finally, in the third part of this section, we investigate Alexander duality, which gives us simpler resolutions, but show that the ${\N}$-graded and global Betti-numbers of these resolutions are not sufficient to give us the weight hierarchy.

In Section 5 we derive some consequences of our results, concerning MDS-properties of linear codes, and some codes from algebraic curves.

\subsection{Main results}

There are $12$ sets of Betti numbers associated to a matroid $M$, namely, the finely (or $\N^E-$) graded, coarsely (or $\N-$) graded and global Betti numbers of the $4$ simplicial complexes $M$, $\overline{M}$, $M^\star$ and $(\overline{M})^\star$ (the matroid itself, its dual, its Alexander dual, and the Alexander dual of its dual). The finely graded Betti numbers of any of these simplicial complexes trivially entirely determine $M$. We show that the coarsely graded Betti numbers of $M$ and $\overline{M}$ determine the weight hierarchy in a non-trivial way(Theorem~\ref{main}). We also give examples that show that the converse doesn't hold, and that none of the (ordered) sets of global Betti numbers defined determine the higher weights.

\subsection{Remarks on computational complexity}
Vardy's paper \cite{V} showed that computing the minimum distance of a binary linear code is an NP-hard problem. One might ask whether our main result indicates that computing $\N$-graded Betti numbers of Stanley-Reisner rings of simplicial complexes also is NP-hard, since doing that is an important part in our procedure to obtain the higher weights, and thereby the minimum distance of a code/matroid.
 
The situation is, however, complex. From the input (code given by its parity check matrix) to the output (minimum distance  $d_1$), there is in our set-up in principle $4$ steps, as illustrated in our diagram:
 
Step $1$ consists of determining (the dependent sets of) the matroid derived from the parity check matrix.
 
Step $2$ consists of determining the Stanley-Reisner ring/ideal of the simplicial complex which is the output of Step $1$.
 
Step $3$ is to compute the $\N$-graded Betti numbers of the Stanley-Reisner ring (ideal) which is the ouput of Step 2. Algorithms for computing Betti numbers are known, for example~\cite{Bigatti1,Roune1,Saenz1}.
 
Step $4$ is to find the minimum distance from the $\N$-graded Betti numbers.
 
The following diagram summarizes that
\[\xymatrix{*+[F]{\begin{array}{c}\textrm{code given by its }\\ \textrm{parity check matrix H}\end{array}} \ar[rr]^{\textrm{NP-hard}} \ar[dd]^{(1)} && *+[F]{\begin{array}{c}\textrm{minimum} \\\textrm{distance }d_1\end{array}} \\ \\*+[F]{\begin{array}{c}\textrm{matroid/}\\\textrm{simplicial}\\\textrm{ complex}\end{array}} \ar[r]^{\textrm{(2)polynomial}} &*+[F]{\begin{array}{c}\textrm{Stanley-}\\\textrm{Reisner}\\\textrm{ring/ideal}\end{array}} \ar[r]^{(3)} &*+[F]{\begin{array}{c}\textrm{minimal free}\\ \textrm{resolution/}\\\textrm{betti numbers}\end{array}} \ar[uu]_{\textrm{ }}^{\textrm{(4)polynomial}}}\]

As Steps $2$ and $4$ are clearly polynomial, Vardy' result should imply that Step $1$ or Step $3$ would be NP-hard (including the theoretical possibility that both of these steps are NP-hard). But we think that Step $1$ is NP-hard. This step almost amounts to finding all the codewords (almost because some codewords might have a support included in the support of another codeword - and a circuit of the matroid is a minimal support for the inclusion of the codewords). But that seems to us at least as hard as the minimum distance problem stated in \cite{V}. Since Step $1$ thus might be NP-hard, we are not able to draw any conclusion about the NP-hardness of Step $3$ from the result in \cite{V}.

It should also be remarked that in Proposition 2.9 of \cite{BS} one shows that to find the Krull dimension of $R=k[x_0,\cdots,x_n]/I$, for a monomial ideal $I$, is NP-complete. In our case, however, we just study those particular monomial ideals that are Stanley-Reisner ideals $I$ of simplicial complexes where the faces are precisely the independent sets of some matroid. For these particular rings $R$, finding the $\N$-graded Betti numbers will give all the non-zero ungraded Betti numbers, and the numbers of these will give the projective dimension, and by the Auslander-Buchsbaum theorem therefore the depth, and since matroid simplicial complexes are Cohen-Macaulay, also the Krull dimension. So if one could prove that finding the Krull dimension of those particular Stanley-Reisner rings we are dealing with, is NP-complete (or NP-hard), then finding the Betti numbers of those particular rings would also be NP-complete (and NP-hard), and finding the Betti numbers of all Stanley-Reisner rings would be NP-hard, as would be finding the Betti numbers of all monomial ideals. But unfortunately the proof that is applied in \cite{BS} to establish NP-completeness of deciding the Krull dimension of a quotient of a polynomial  ring by a general monomial ideal (reducing to the vertex cover problem described  in \cite{K}), does not work when restricting only to the subclass of the particular ideals that are Stanley-Reisner ideals of matroidal simplicial complexes.

%%%%%%%%%%%%%%%%%%%%%%%%%%%%%%%%%%%%%%%%%%%%%%%%%%%%%%%%%%%%%%%%%%%%%%%%%%%%%%%%%%%%%%%%%%%%%%%%%%%%%%%%%%%%
%
%  DEFINITIONS ET NOTATIONS
%
%%%%%%%%%%%%%%%%%%%%%%%%%%%%%%%%%%%%%%%%%%%%%%%%%%%%%%%%%%%%%%%%%%%%%%%%%%%%%%%%%%%%%%%%%%%%%%%%%%%%%%%%%%%

\section{Definitions and notation}

A simplicial complex $\Delta$ on the finite ground set $E$ is a subset of $2^E$ closed under taking subsets.  We refer to~\cite{1Miller} for a brief introduction of the theory of simplicial complexes, and we follow their notation. The elements of $\Delta$ are called faces, and maximal elements under inclusion, facets. The dimension of a face is equal to one less its cardinality. We denote $F_i(\Delta)$ the set of its faces of dimension $i$. The Alexander dual $\Delta^\star$ of $\Delta$ is the simplicial complex defined by \[\Delta^\star = \{ \overline{\sigma}: \sigma \not \in \Delta\}\] where $\overline{\sigma}=E-\sigma$. Given a simplicial complex $\Delta$ on the ground set $E$, define its Stanley-Reisner ideal and ring in the following way: let $\k$ a field and let $S=\k[\mathbf{x}]$ be the polynomial ring over $\k$ in $|E|$ indeterminates $\mathbf{x} = \{x_e:\ e \in E\}$. Then the Stanley-Reisner ideal $I_\Delta$ of $\Delta$ is   \[I_\Delta = <\mathbf{x}^\sigma|\ \sigma \not \in \Delta>\] and its Stanley-Reisner ring is $R_\Delta=S/I_\Delta$. This ring has a minimal free resolution as a $\N^{E}$-graded module \begin{equation} \label{SR} 0 \longleftarrow R_\Delta  \overset{\partial_0}{\longleftarrow} P_0  \overset{\partial_1}{\longleftarrow} P_1 \longleftarrow \ldots \overset{\partial_l}{\longleftarrow} P_l\longleftarrow 0 \end{equation} where each $P_i$ is of the form \[P_i = \bigoplus_{\alpha \in \N^{E}}S(-\alpha)^{\beta_{i,\alpha}}.\] 
The $\beta_{i,\alpha}$ are called the $\N^{E}$-graded Betti numbers of $\Delta$ over $\k$. We have $\beta_{i,\alpha}=0$ if $\alpha \in \N^E - \{0,1\}^E$. The Betti numbers are independent of the choice of the minimal free resolution. The $\N$-graded and global Betti numbers of $\Delta$ are respectively \[\beta_{i,d} = \sum_{|\alpha| = d}\beta_{i,\alpha}\] and \[\beta_i = \sum_d \beta_{i,d}.\]

For sake of clarity, we will picture the $\N$-graded Betti numbers in a Betti diagram, in which $\beta_{i,d}$ appears at the intersection of the $i$-th column and the $(d-i)$-th row. In these diagrams, we omit $\beta_{0,i}$, since $\beta_{0,0}=1$ and $\beta_{0,d}=0$ otherwise.

If we give an ordering $\omega$ on $E$, we can build the reduced chain complex of $\Delta$ with respect to $\omega$ in the following way: for any $ i \in \N$, let $V_i$ be a vector space over $\k$ whose basis elements $e_\sigma$ correspond to $\sigma \in F_i(\Delta)$. And if $\sigma \in F_i(\Delta)$, define \[\partial_{\omega,i}(e_\sigma) = \sum_{x \in \sigma}\epsilon_{\omega,\sigma}(x)e_{\sigma -\{x\}},\] where $\epsilon_{\omega,\sigma}(x) = (-1)^{r-1}$ if $x$ is the $r^{\text{th}}$ element in $\sigma$ with respect to the ordering $\omega$.  The chain complex is \[0 \longleftarrow V_{-1} \overset{\partial_{\omega,0}}\longleftarrow V_0 \overset{\partial_{\omega,1}}\longleftarrow V_1 \overset{\partial_{\omega,2}}\longleftarrow\ldots \overset{\partial_{\omega,|E|-1}}\longleftarrow V_{|E|-1} \longleftarrow 0.\] Let \[\tilde{h}_i(\Delta;\k) = \dim_\k(\ker(\partial_{\omega,i})/\text{im}(\partial_{\omega,i+1})).\] This is independent of the ordering $\omega$, so we omit it in the notation.

If $\sigma \subset E$, we denote by $\Delta_\sigma$  the simplicial complex whose faces are $\{\tau \cap \sigma:\ \tau \in \Delta\}$.  A result by Hochster~\cite{Hochster1} says that \[\beta_{i,\sigma} = \tilde{h}_{|\sigma|-i-1}(\Delta_\sigma;\k).\]

Let now $M$ be a matroid on the finite ground set $E$. A matroid is a simplicial complex satisfying the following extra property: if $\tau,\sigma$ are faces with $|\tau|<|\sigma|$, then there exists $x \in \sigma - \tau$ such that $\tau \cup\{x\}$ is a face. We refer to~\cite{1Oxley} for their theory. We denote by $\mathcal{I}_M$, $\mathcal{B}_M$, $\mathcal{C}_M$ and  $r_M$ the set of independent sets (faces), bases (facets), circuits (minimal non-faces) and rank function of $M$ respectively. The nullity function $n_M$ is defined by $n_M(\sigma)=|\sigma|-r_M(\sigma).$ By abuse of notation, $r(M)=r_M(E)$. The dual $\overline{M}$ of $M$ is defined by \[\mathcal{B}_{\overline{M}} = \{\overline{\sigma}:\ \sigma \in \mathcal{B}_M\}.\] Note that while $\overline{M}$ is a matroid, $M^\star$ generally isn't. Note also that if $\sigma \subset E$, then $M_\sigma$ is a matroid on the set $\sigma$, with rank function \[r_{M_\sigma}(\tau) = r_M(\tau).\]

\begin{definition} \label{matroidweights}
The generalized Hamming weights of $M$ are defined by \[d_i = \min\{|\sigma|:\ n_M(\sigma) = i\}\] for $1\leqslant i \leqslant |E| - r(M)$. 
\end{definition}
We will often omit the reference to $M$ when it is obvious from the context.
\begin{remark}\label{field} The Betti numbers considered in this paper are independent of the choice of the field $\k$. For matroids, Hochster's formula gives the Betti numbers. Since the restrictions of a matroid to subsets of the ground set are themselves matroids, and therefore shellable simplicial complexes, the reduced homology of these complexes, and thus the Betti numbers of the matroid, are independent of the choice of the field (\cite{Bjorner1}). For Alexander duals of matroids, this is~\cite[Corollary 5]{Eagon1}.
\end{remark}

\begin{remark} \label{central}
 If $C$ is a linear $[n,k]$-code over some finite field $\Fq$, with $(r \times n)$ parity check matrix $H$ (think of $r$ as redundancy of $C$ or rank of $H$), and 
$M=M_H$ is the matroid associated to $H$, then it is well known (see e.g. \cite{W}) that the higher weight heierarchy $d=d_1 < \ldots < d_k$ of $C$ as a linear code is identical to that of $M$ in the sense of Definition~\ref{matroidweights}, and this is the motivation of our defintion. Viewing the matroid $M$ as a special case of a so-called demi-matroid, as in \cite{BJMS}, the invariants $d_i$ are the same as those called $\overline{\sigma_i},$ for the trivial poset order there.
\end{remark} 

The goal of this paper is to see the relations between Betti numbers and generalized Hamming weights, for matroids in general, and then automatically, for those matroids associated to (parity check matrices of) linear codes.

Throughout this paper, we will use a running example, to emphasize the different points. Some other examples will also be provided. This running example is the following \begin{example} \label{runningexample}
Let $C$ be the binary non-degenerate $[6,3]$-code with parity check matrix \[H_1=\begin{pmatrix}1  & 0 & 0 & 1 & 0 & 1 \\0  & 1 & 0 & 1 & 1 & 0 \\0  & 0 & 1 & 1 & 1 & 0\end{pmatrix}.\] The matroid $M_1=M_{H_1}$ has $E=\{1,2,3,4,5,6\}$ and maximal independent sets (basis): \[ \B_1=\{\{1,2,3\}, \{1,2,4\}, \{ 1,2,5\}, \{ 1,3,4\}, \{ 1,3,5\}, \{ 2,3,4\}, \{2,3,6\}, \] \[ \{2,4,5\}, \{ 2,4,6\}, \{  2,5,6\}, \{ 3,4,5\}, \{ 3,4,6\}, \{ 3,5,6\}\}. \] Its circuits are \[\mathcal{C}_1= \{\{1,2,3,4\},\{1,4,5\},\{1,6\},\{2,3,4,6\},\{2,3,5\},\{4,5,6\}\}.\] Its Stanley-Reisner ring is $R_{M_1}=\k[x_1,x_2,x_3,x_4,x_5,x_6]/I_1$ where \[I=<x_1x_2x_3x_4, x_1x_4x_5,x_1x_6,x_2x_3x_4x_6, x_2x_3x_5,x_4x_5x_6>.\] The generalized Hamming weights are $d_1=2$, $d_2=4$ and $d_3=6$.
\end{example}

%%%%%%%%%%%%%%%%%%%%%%%%%%%%%%%%%%%%%%%%%%%%%%%%%%%%%%%%%%%%%%%%%%%%%%%%%%%%%%%%%%%%%%%%%%%%%%%%%%%%%%%%%%%%
%
%  ICI ON MONTRE QUE n(\sigma)=n SSI \sigma ADMET n CIRCUITS NON-REDUNDANTS ET PAS PLUS
%
%%%%%%%%%%%%%%%%%%%%%%%%%%%%%%%%%%%%%%%%%%%%%%%%%%%%%%%%%%%%%%%%%%%%%%%%%%%%%%%%%%%%%%%%%%%%%%%%%%%%%%%%%%%

\section{Relation between the nullity function and the non-redundancy of circuits}

We start by giving some definitions about the non-redundancy of circuits.

\begin{definition}
Given a matroid $M$ and $\Sigma \subset \mathcal{C}_M$. We say that the elements in $\Sigma$ are non-redundant if for every $\sigma \in \Sigma$, \[\bigcup_{\tau \in \Sigma-\{\sigma\}} \tau \subsetneq \bigcup_{\tau \in \Sigma} \tau.\]
\end{definition}

\begin{remark}\label{rquenr}It is obvious that the elements in $\Sigma$ are non-redundant if and only if for each $\sigma \in \Sigma$, there exists  $x_\sigma \in \sigma$ which isn't in any other $\tau \in \Sigma$.\end{remark}

\begin{definition}Let $M$ be a matroid and $\sigma$ be a subset of the ground set. The degree of non-redundancy of $\sigma$ is equal to the maximal number of non-redundant circuits contained in $\sigma$. It is denoted by $deg\ \sigma$.
\end{definition}

We will now see that the degree of non-redundancy of a subset is equal to its nullity.

\begin{lemma} Let $M$ be a matroid and let $\tau_1,\ldots,\tau_s$ be non-redundant circuits. Then \[n(\bigcup_{1\leqslant i\leqslant s}\tau_i) \geqslant s.\]
\end{lemma}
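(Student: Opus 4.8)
The plan is to use the distinguished elements provided by non-redundancy and to show that deleting them from the union costs nothing in rank, so that their count is absorbed entirely into the nullity. Write $U = \bigcup_{i=1}^s \tau_i$. By Remark~\ref{rquenr}, for each $i$ there is an element $x_i \in \tau_i$ lying in no other $\tau_j$; these elements are automatically pairwise distinct, since $x_i = x_j$ with $i \neq j$ would place $x_i$ in $\tau_j$. I would then set $U' = U - \{x_1,\ldots,x_s\}$, so that $|U| = |U'| + s$.

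The heart of the argument is to verify that $r(U) = r(U')$. Because $x_j \notin \tau_i$ whenever $j \neq i$, the set $\tau_i - \{x_i\}$ contains none of the distinguished elements and is therefore a subset of $U'$. Since $\tau_i$ is a circuit, the proper subset $\tau_i - \{x_i\}$ is independent while $\tau_i$ itself is dependent, so $r(\tau_i - \{x_i\}) = r(\tau_i)$; that is, $x_i$ lies in the closure of $\tau_i - \{x_i\}$, and hence in the closure of $U'$. As this holds for every $i$, all of $U = U' \cup \{x_1,\ldots,x_s\}$ lies in the closure of $U'$, and consequently $r(U) = r(U')$.

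Putting the two facts together yields
\[ n(U) = |U| - r(U) = \big(|U'| + s\big) - r(U') = n(U') + s, \]
and since the nullity of any set is nonnegative, $n(U) \geqslant s$, as claimed.

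I expect the one point requiring care to be the rank equality $r(U) = r(U')$, where non-redundancy is used twice: once to ensure that each $\tau_i - \{x_i\}$ avoids the other distinguished elements and so sits inside $U'$, and once to guarantee that the $x_i$ are genuinely distinct (so that the cardinality drops by exactly $s$). Given these, the circuit property supplies the closure statement for free, and the desired inequality follows at once from nonnegativity of the nullity. An induction on $s$ is also possible, but it is messier, since one must then control directly how both the cardinality and the rank change upon adjoining $\tau_s$; the direct deletion argument above sidesteps this.
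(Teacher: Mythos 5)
Your proof is correct, but it takes a genuinely different route from the paper's. You argue directly: pick the distinguished elements $x_i$ guaranteed by Remark~\ref{rquenr} (correctly noting they are pairwise distinct), delete them to form $U' = U - \{x_1,\ldots,x_s\}$, and show $r(U)=r(U')$ because each $x_i$ lies in the closure of the independent set $\tau_i - \{x_i\} \subseteq U'$; counting then gives $n(U) = n(U') + s \geqslant s$. The paper instead proceeds by induction on $s$, applying submodularity of the nullity function, $n(A\cup B) \geqslant n(A) + n(B) - n(A\cap B)$, with $A$ the union of the first $s$ circuits and $B = \tau_{s+1}$; there the distinguished element $x_{s+1}$ is used only to see that $A \cap B \subseteq \tau_{s+1} - \{x_{s+1}\}$ is independent, so the correction term vanishes. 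Your argument buys a slightly stronger conclusion, namely the exact identity $n(U) = n\bigl(U - \{x_1,\ldots,x_s\}\bigr) + s$, and it avoids induction entirely, at the cost of invoking the closure operator. The paper's argument uses nothing beyond the rank submodularity axiom, and its incremental structure matches the way circuits are added one at a time in the subsequent Lemma~\ref{undeplus}. One small remark: you say an induction "is messier," but the paper's induction is in fact quite clean, precisely because submodularity handles the bookkeeping you were worried about.
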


\begin{proof} This is obvious for $s=1$. There is actually equality in that case. Suppose that this is true for $s\geqslant 1$, and we prove that it is also true for $s+1$. Let $x_{s+1} \in \tau_{s+1}$ but in no other $\tau_i$. Of course, $\tau_1,\ldots,\tau_s$ are non-redundant, and by induction hypothesis, \[n(\bigcup_{1\leqslant i\leqslant s}\tau_i) \geqslant s.\] We know that for any given two subsets $A,B$ of the ground set, we have \[n(A\cup B) \geqslant n(A) + n(B) - n(A \cap B),\] since this is equivalent to the matroid axiom \[r(A\cup B) + r(A \cap B) \leqslant r(A) + r(B).\] If we apply it to $A= \bigcup_{1\leqslant i\leqslant s}\tau_i$ and $B =\tau_{s+1}$, noticing that $n(B) = n(\tau_{s+1}) = 1$, $n(A) \geqslant s$, we see that \[n(\bigcup_{1\leqslant i\leqslant s+1}\tau_i) \geqslant s +1 - n(A \cap B).\] But in this case, $A \cap B \subset \tau_{s+1} - \{x_{s+1}\}$ has to be independent, and therefore $n(A \cap B)=0$ and the lemma follows.
\end{proof}

\begin{corollary} Let $M$ be a matroid and $\sigma$ a subset of the ground set. Then \[n(\sigma) \geqslant deg(\sigma).\]
\end{corollary}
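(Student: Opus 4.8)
The plan is to read off the corollary directly from the preceding Lemma, the only extra ingredient being the monotonicity of the nullity function under inclusion. First I would set $s = deg(\sigma)$ and invoke the definition of the degree of non-redundancy: by maximality there exist non-redundant circuits $\tau_1,\ldots,\tau_s$, all of them contained in $\sigma$. Applying the Lemma to this family gives $n\left(\bigcup_{1\leqslant i\leqslant s}\tau_i\right) \geqslant s$.

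The second step is to transfer this lower bound from the union of the circuits up to $\sigma$ itself. Since each $\tau_i \subseteq \sigma$, we have $\bigcup_{1\leqslant i\leqslant s}\tau_i \subseteq \sigma$, so it suffices to know that $A \subseteq B$ implies $n(A) \leqslant n(B)$. I would verify this from the rank axioms: for $A \subseteq B$ one has $n(B) - n(A) = (|B|-|A|) - (r(B)-r(A))$, and because the rank increases by at most one for each adjoined element, $r(B) - r(A) \leqslant |B| - |A|$, whence $n(A) \leqslant n(B)$. Taking $A = \bigcup_{1\leqslant i\leqslant s}\tau_i$ and $B = \sigma$ then chains the two inequalities into $n(\sigma) \geqslant n\left(\bigcup_{1\leqslant i\leqslant s}\tau_i\right) \geqslant s = deg(\sigma)$, which is the claim.

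There is essentially no obstacle in this argument, since all the inductive work was already carried out in the Lemma; the corollary is a one-line consequence once the maximal non-redundant family realizing $deg(\sigma)$ has been fixed. The only supporting fact, monotonicity of $n$, is an immediate consequence of the matroid rank axioms and requires no separate effort. I would expect the reverse inequality $n(\sigma) \leqslant deg(\sigma)$ (and hence equality, as foreshadowed before the Lemma) to be the substantive statement, but that is not what is being asked for here.
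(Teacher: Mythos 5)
Your proof is correct and follows exactly the same route as the paper: take a maximal family of non-redundant circuits realizing $deg(\sigma)$, apply the preceding Lemma to their union, and conclude by monotonicity of the nullity function (which the paper simply asserts as ``$n$ is growing'' and you verify from the rank axioms).
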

\begin{proof} Let $d=deg(\sigma)$, and $\tau_1,\ldots\tau_d$ be $d$ non-redundant circuits included in $\sigma$. Since $n$ is growing, we have \[n(\sigma) \geqslant n(\bigcup_{1\leqslant i\leqslant d}\tau_i) \geqslant d.\]
\end{proof}

\begin{lemma}\label{undeplus}Let $M$ be a matroid and $\tau_1,\ldots,\tau_m$ be $m$ non-redundant circuits with union $\tau=\bigcup_{1\leqslant i \leqslant m}\tau_i$. Let $\rho$ be another circuit such that $\rho \not\subset \tau$. Let $x \in \rho - \tau$. Then there exists a circuit $\tau_{m+1}$ such that $x \in \tau_{m+1}$ and such that $\tau_1,\ldots,\tau_{m+1}$ are non-redundant.
\end{lemma}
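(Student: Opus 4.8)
The plan is to reduce the statement, via the private-element characterisation of non-redundancy in Remark~\ref{rquenr}, to the purely combinatorial task of producing a circuit through $x$ that avoids a fixed set of distinguished elements, and then to construct such a circuit by repeated strong circuit elimination.

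First I would fix, for each $i$ with $1 \leqslant i \leqslant m$, an element $x_i \in \tau_i$ lying in no other $\tau_j$; these exist by Remark~\ref{rquenr} applied to the non-redundant family $\tau_1,\ldots,\tau_m$. Set $D = \{x_1,\ldots,x_m\}$. The key elementary observation, which I would record immediately, is that $\tau_j \cap D = \{x_j\}$ for every $j$: indeed $x_i \in \tau_j$ forces $i = j$, precisely because $x_i$ is private to $\tau_i$. I would then note that it suffices to find a circuit $\tau_{m+1}$ with $x \in \tau_{m+1}$ and $\tau_{m+1} \cap D = \emptyset$. For such a $\tau_{m+1}$ the family $\tau_1,\ldots,\tau_{m+1}$ is non-redundant by Remark~\ref{rquenr}: each $x_i$ with $i \leqslant m$ still lies in $\tau_i$ alone among the original circuits and is excluded from $\tau_{m+1}$, while $x \in \tau_{m+1}$ lies in no $\tau_i$ since $x \notin \tau$.

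To build such a circuit I would argue by induction on $|C \cap D|$ over circuits $C$ containing $x$, starting from $C = \rho$ (which contains $x$ by hypothesis). If $|C \cap D| = 0$ we are done. Otherwise pick $x_j \in C \cap D$; then $x_j \in C \cap \tau_j$, while $x \in C - \tau_j$ because $x \notin \tau \supseteq \tau_j$. The strong circuit elimination axiom (see~\cite{1Oxley}) then yields a circuit $C'$ with $x \in C' \subseteq (C \cup \tau_j) - x_j$. Using the observation $\tau_j \cap D = \{x_j\}$ one computes $C' \cap D \subseteq ((C \cap D) \cup (\tau_j \cap D)) - x_j = (C \cap D) - x_j$, so that $|C' \cap D| < |C \cap D|$, and the induction proceeds to the desired circuit $\tau_{m+1}$.

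The main obstacle is precisely the control of the elimination step: a priori, eliminating the offending element $x_j$ via $\tau_j$ could reintroduce other distinguished elements coming from $\tau_j$, which would stall the induction. This is exactly what the observation $\tau_j \cap D = \{x_j\}$ rules out, and it is the reason for working with the private transversal $D$ rather than with the whole circuits. The remaining points, namely that strong circuit elimination is applicable here (which needs $x_j \in C \cap \tau_j$ and $x \in C - \tau_j$) and that the terminal circuit certifies non-redundancy, are then routine.
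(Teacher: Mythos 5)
Your proof is correct and is essentially the paper's own argument: both fix private elements $x_i$ via Remark~\ref{rquenr} and use strong circuit elimination to remove them one at a time from a circuit through $x$, the only difference being that the paper phrases the descent as choosing a circuit containing $x$ with fewest $x_i$'s and deriving a contradiction, while you run it as an explicit induction on $|C\cap D|$ starting from $\rho$. Your explicit observation that $\tau_j\cap D=\{x_j\}$ is precisely the fact the paper glosses as ``easy to see.''
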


\begin{proof} 
Since the $\tau_i$ are non-redundant, for each $1\leqslant i \leqslant m$, by Remark~\ref{rquenr}, we can find $x_i \in \tau_i$ such that $x_i \not\in \tau_j$ if $j \neq i$. Consider the set of circuits that contain $x$. It is by hypothesis not empty. Consider an element $\tau_{m+1}$ in this set that contains fewest $x_i$. We claim that this number is $0$.  If not, then there exists $i \le m$ such that $x_i \in \tau_{m+1}$. Consider the two circuits $\tau_{m+1}$ and $\tau_i$. They have the element $x_i$ in common. Moreover, $x \in \tau_{m+1} - \tau_i$. By the strong elimination axiom for circuits of a matroid, we can find a circuit $\sigma$ such that \[x \in \sigma \subset \tau_{m+1} \cup \tau_i -\{x_i\}.\] It is easy to see that $\sigma$ has fewer $x_i$ than $\tau_{m+1}$, which is absurd. This means that $\tau_1,\ldots,\tau_{m+1}$ are non-redundant.
\end{proof}

\begin{corollary}\label{union_nonredundante} Let $M$ be a matroid, and $\tau_1,\ldots,\tau_m$ be a maximal set of non-redundant circuits. Then \[\bigcup_{1\leqslant i \leqslant m}\tau_i = \bigcup_{\tau \in \mathcal{C}_M} \tau.\]
\end{corollary}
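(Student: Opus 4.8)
The plan is to prove the stated set equality by a double inclusion, where the nontrivial direction follows immediately from Lemma~\ref{undeplus} together with the maximality hypothesis. The inclusion $\bigcup_{1\leqslant i \leqslant m}\tau_i \subseteq \bigcup_{\tau \in \mathcal{C}_M}\tau$ is immediate, since each $\tau_i$ is itself a circuit of $M$, so every element of the left-hand union lies in some circuit.

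For the reverse inclusion I would argue by contradiction. Write $\tau = \bigcup_{1\leqslant i \leqslant m}\tau_i$ and suppose there is an element $x \in \bigcup_{\rho \in \mathcal{C}_M}\rho$ with $x \notin \tau$. Then $x$ belongs to at least one circuit $\rho$, and since $x \in \rho$ but $x \notin \tau$ we have both $\rho \not\subset \tau$ and $x \in \rho - \tau$. These are precisely the hypotheses of Lemma~\ref{undeplus}, applied to the non-redundant circuits $\tau_1,\ldots,\tau_m$ and the circuit $\rho$.

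Invoking the lemma then produces a further circuit $\tau_{m+1}$ (containing $x$) such that $\tau_1,\ldots,\tau_{m+1}$ are still non-redundant. This contradicts the assumption that $\tau_1,\ldots,\tau_m$ is a \emph{maximal} set of non-redundant circuits, since we have enlarged the family while preserving non-redundancy. Hence no such $x$ exists, which gives $\bigcup_{\rho \in \mathcal{C}_M}\rho \subseteq \tau$; combined with the trivial inclusion this yields the desired equality.

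As for the main obstacle: essentially all the work has already been done in Lemma~\ref{undeplus}, whose proof uses the strong circuit elimination axiom to extend a non-redundant family through any prescribed new element lying outside the current union. Given that lemma, the only thing to verify here is that its hypotheses are met---namely that an element outside $\tau$ yet inside some circuit supplies a circuit $\rho$ with $\rho \not\subset \tau$ and $x \in \rho - \tau$---which is automatic. So I expect no genuine difficulty beyond correctly aligning the maximality condition against the conclusion of the lemma.
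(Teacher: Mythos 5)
Your proof is correct and follows exactly the route the paper intends: the corollary is stated there without proof precisely because it is the immediate consequence of Lemma~\ref{undeplus} plus maximality that you spell out. Nothing is missing; your contradiction argument is the standard (and essentially only) way to read off this statement from the lemma.
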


\begin{lemma}Let $M$ be a matroid and $\sigma$ a subset of the ground set. Let $d=n(\sigma)$. Then there exist $d$ non-redundant circuits in $\sigma$. Thus $deg(\sigma) \geqslant n(\sigma)$.
\end{lemma}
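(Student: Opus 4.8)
The plan is to prove the reverse inequality $\deg(\sigma) \geqslant n(\sigma)$ by producing $d = n(\sigma)$ non-redundant circuits inside $\sigma$, and the natural engine for this is Corollary~\ref{union_nonredundante} together with the incremental construction in Lemma~\ref{undeplus}. First I would take a maximal set of non-redundant circuits $\tau_1,\ldots,\tau_m$ contained in $\sigma$, and set $\tau = \bigcup_{1\leqslant i\leqslant m}\tau_i$. By the earlier lemma we already know $n(\tau) \geqslant m$, and since $n$ is monotone this gives $n(\sigma) \geqslant m$, so the point is really to show $m \geqslant n(\sigma)$, i.e.\ that a \emph{maximal} non-redundant family inside $\sigma$ cannot be too small.

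The key step is to analyze what maximality forces on the restricted matroid $M_\sigma$. I would argue that $\tau$ spans $\sigma$ in the sense that $r_M(\tau) = r_M(\sigma)$, equivalently $n(\sigma) = n(\tau) + (|\sigma| - |\tau|)$ reduces correctly; more precisely the claim to extract is that every circuit of $M$ contained in $\sigma$ lies in $\tau$. Indeed, suppose there were a circuit $\rho \subset \sigma$ with $\rho \not\subset \tau$, and pick $x \in \rho - \tau$. Then Lemma~\ref{undeplus} produces a further circuit $\tau_{m+1} \ni x$ with $\tau_1,\ldots,\tau_{m+1}$ still non-redundant; since $x \in \rho \subset \sigma$ and the construction keeps $\tau_{m+1}$ inside the span of circuits meeting $\sigma$, one checks $\tau_{m+1} \subset \sigma$, contradicting maximality. (This is the circuit-in-$\sigma$ analogue of Corollary~\ref{union_nonredundante}, obtained by running the whole argument inside the restriction $M_\sigma$, whose rank function agrees with that of $M$.) Hence $\tau$ contains \emph{every} circuit of $M_\sigma$, which means $\sigma - \tau$ contains no circuit and so the complementary part contributes nothing dependent.

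From here I would finish by a nullity count. Because $\tau$ contains all circuits contained in $\sigma$, the set $\sigma$ is obtained from $\tau$ by adjoining only coloops relative to $\tau$, so $r_M(\sigma) = r_M(\tau) + |\sigma - \tau|$ and therefore $n(\sigma) = |\sigma| - r_M(\sigma) = |\tau| - r_M(\tau) = n(\tau)$. On the other hand $n(\tau) = m$: the earlier lemma gives $n(\tau) \geqslant m$, and equality holds because each newly adjoined $\tau_i$ carries a private element $x_i$ and raises the cardinality by at least the increase in rank by exactly one per circuit (each $\tau_i$ increases nullity by exactly $1$, the $s=1$ equality case iterated). Combining, $n(\sigma) = n(\tau) = m = $ the number of non-redundant circuits we exhibited, so $\deg(\sigma) \geqslant m = n(\sigma)$, as required; together with the preceding corollary this yields $\deg(\sigma) = n(\sigma)$.

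The main obstacle I anticipate is the middle step: rigorously upgrading Corollary~\ref{union_nonredundante} to the relativized statement that a maximal non-redundant family \emph{within $\sigma$} exhausts all circuits contained in $\sigma$, while guaranteeing that the circuit $\tau_{m+1}$ produced by Lemma~\ref{undeplus} genuinely stays inside $\sigma$. The clean way around this is to apply every preceding result directly to the matroid $M_\sigma$ (legitimate since $r_{M_\sigma}=r_M$ on subsets of $\sigma$ and circuits of $M_\sigma$ are exactly the circuits of $M$ contained in $\sigma$), so that all containments are automatic; the nullity bookkeeping $n(\sigma)=n(\tau)=m$ is then routine and the inequality $\deg(\sigma)\geqslant n(\sigma)$ follows.
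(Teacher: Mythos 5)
Your first three steps are sound: working inside $M_\sigma$ (whose circuits are exactly the circuits of $M$ contained in $\sigma$) legitimizes the relativized use of Lemma~\ref{undeplus} and Corollary~\ref{union_nonredundante}, so a non-extendable non-redundant family $\tau_1,\ldots,\tau_m$ in $\sigma$ has union $\tau$ containing every circuit of $M_\sigma$, the elements of $\sigma-\tau$ are coloops of $M_\sigma$, and hence $n(\sigma)=n(\tau)$. The fatal step is the claim $n(\tau)=m$. The first lemma of the section gives only $n(\tau)\geqslant m$: submodularity is a one-sided bound, and adjoining a new non-redundant circuit can raise the nullity by \emph{more} than $1$. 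The paper's running example (Example~\ref{runningexample}) is a counterexample to your claim: in $M_1$ the family $\{1,2,3,4\},\{4,5,6\}$ is non-redundant, its union is all of $E$, and none of the four remaining circuits can be adjoined (each of $\{1,4,5\}$, $\{1,6\}$, $\{2,3,4,6\}$, $\{2,3,5\}$ would have no private element), so it is maximal by inclusion; yet $m=2$ while $n(\tau)=n(E)=3$. Thus your chain yields only $n(\sigma)=n(\tau)\geqslant m$, an inequality pointing in the wrong direction: to get $\deg(\sigma)\geqslant n(\sigma)$ you need $m\geqslant n(\sigma)$, which this family simply does not satisfy.

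If instead you interpret ``maximal'' as ``of maximum cardinality,'' then $m=\deg(\sigma)$ by definition and the assertion $n(\tau)=m$ is essentially the proposition being proved; the justification offered (``the $s=1$ equality case iterated'') is not valid, since equality in the $s=1$ case does not propagate through the union, so the argument becomes circular. The missing idea is the paper's induction on the ground set: take a minimal counterexample $\sigma$ with $d=n(\sigma)\geqslant 2$, pick a circuit $\tau\subset\sigma$ and an element $x\in\tau$, and pass to $\sigma'=\sigma-\{x\}$. Deleting a single element drops nullity by at most $1$, so by minimality $\sigma'$ contains $d-1$ non-redundant circuits; because they lie in $\sigma'$ they all avoid $x$, and Lemma~\ref{undeplus} (applied in $M_\sigma$, exactly as you note one must) adjoins a circuit through $x$, producing $d$ non-redundant circuits in $\sigma$ and the desired contradiction. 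It is this element-removal step that forces each new circuit to contribute exactly one to the count --- something a global counting argument over a maximal family cannot deliver.
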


\begin{proof}
If $n(\sigma)=0$, then $\sigma$ is independent and doesn't contain any circuit. If $n(\sigma)\ge 1$, then $\sigma$ is dependent and contains at least one circuit. This shows that the lemma holds for $d=0$ and for $d=1$. Suppose now that the lemma doesn't hold, and let $\sigma$ be minimal for the inclusion such that it doesn't hold. Then, by the previous remarks, $d= n(\sigma)\ge 2$ and we can find a circuit $\tau \subset \sigma$. As $\tau$ is not empty, choose $x \in \tau$ and consider $\sigma'=\sigma - \{x\}$. By minimality of $\sigma$, the lemma holds for $\sigma'$, and we can therefore find $n(\sigma')$ non-redundant circuits in $\sigma'$. Since \[d-1 \le n(\sigma') \le d,\] we have thus found at least $d-1$ non-redundant  circuits in $\sigma'$, and a fortiori in $\sigma$, say $\tau_1,\ldots,\tau_{d-1}$. Since \[x \in \tau - \bigcup_{1 \le i \le d-1}\tau_i,\] we can apply Lemma~\ref{undeplus} to get a circuit $\tau_d$ such that $\tau_1,\ldots,\tau_d$ are non-redundant, which is absurd. 
\end{proof}

\begin{proposition} Let $M$ be a matroid, and let $\sigma$ be a subset of the ground set. Then we have \[deg\ \sigma = n(\sigma).\]
\end{proposition}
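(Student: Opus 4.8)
The plan is to prove the equality $\deg\ \sigma = n(\sigma)$ by combining the two inequalities that have already been established in the preceding results. This is precisely the kind of statement that follows immediately once both directions are in hand, so the proof should be essentially a one-line assembly rather than a fresh argument.

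First I would invoke the Corollary following the first Lemma, which states that $n(\sigma) \geqslant \deg(\sigma)$; this gives one inequality directly. Then I would invoke the final Lemma above (the one showing that if $d = n(\sigma)$ there exist $d$ non-redundant circuits in $\sigma$, hence $\deg(\sigma) \geqslant n(\sigma)$), which supplies the reverse inequality $\deg(\sigma) \geqslant n(\sigma)$. Putting these two together yields $\deg\ \sigma = n(\sigma)$, completing the proof.

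There is no real obstacle here: all the substantive work has been done in the lemmas. The nontrivial content was split between the submodularity argument (showing nullity bounds the number of non-redundant circuits from above) and the inductive extension argument using Lemma~\ref{undeplus} and the strong circuit elimination axiom (showing one can always find $n(\sigma)$ non-redundant circuits inside $\sigma$). The only thing I would take care to state explicitly is that $\deg(\sigma)$ and $n(\sigma)$ are both finite nonnegative integers, so that the two inequalities genuinely pin down a single value; this is immediate since $\sigma$ is a finite set and both functions take values in $\N$. Thus the proposition is simply the conjunction of the two opposite inequalities already proved.
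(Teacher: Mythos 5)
Your proposal is correct and matches the paper's intent exactly: the paper states the proposition without a separate proof precisely because it follows by combining the corollary $n(\sigma) \geqslant \deg(\sigma)$ with the final lemma giving $\deg(\sigma) \geqslant n(\sigma)$. Your assembly of the two inequalities is the same (implicit) argument.
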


\begin{example}For our running example (Example~\ref{runningexample}), let $\sigma=\{1,2,3,4,5,6\}$. Then $n(\sigma) = 3$. Even if we can write $\sigma$ as a union of $2$ non-redundant circuits \[\sigma = \{1,2,3,4\} \cup\{4,5,6\},\] it can also be written as a union of $3$ non-redundant circuits, for instance \[\sigma=\{1,2,3,4\} \cup \{1,4,5\} \cup \{1,6\}\] but not as a union of $4$ or more non-redundant circuits. 
\end{example}

%%%%%%%%%%%%%%%%%%%%%%%%%%%%%%%%%%%%%%%%%%%%%%%%%%%%%%%%%%%%%%%%%%%%%%%%%%%%%%%%%%%%%%%%%%%%%%%%%%%%%%%%%%%%
%
%  PARTIE PRINCIPALE: LES NOMBRES DE BETTI DONNENT LA HIERARCHIE
%
%%%%%%%%%%%%%%%%%%%%%%%%%%%%%%%%%%%%%%%%%%%%%%%%%%%%%%%%%%%%%%%%%%%%%%%%%%%%%%%%%%%%%%%%%%%%%%%%%%%%%%%%%%%

\section{Betti numbers and generalized Hamming weights}
\subsection{The $\N$-graded Betti numbers give the weight hierarchy}
Let $M$ be a matroid on the ground set $E$. For any integer $0\leqslant d \leqslant |E| - r(M)$, let $N_d = n^{-1}(d)$. Note that $N_0=\mathcal{I}$. We will now prove the following:

\begin{theorem} \label{prep}
Let $M$ be a matroid on the ground set $E$. Let $\sigma \subset E$. Then \[\beta_{i,\sigma} \neq 0 \Leftrightarrow \sigma \text{ is minimal in }N_{i}\textrm{ for inclusion}.\] Moreover,  \[\beta_{n(\sigma),\sigma} = (-1)^{r(\sigma)-1}\chi(M_\sigma).\]
\end{theorem}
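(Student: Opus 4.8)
The plan is to compute the graded Betti numbers $\beta_{i,\sigma}$ via Hochster's formula, which (as recalled in the excerpt) gives $\beta_{i,\sigma} = \tilde{h}_{|\sigma|-i-1}(M_\sigma;\k)$, reducing everything to understanding the reduced homology of the restricted matroid complex $M_\sigma$. The key structural fact I would exploit is that $M_\sigma$ is itself a matroid on the ground set $\sigma$, hence a shellable (and Cohen--Macaulay) simplicial complex. For such a complex the reduced homology is concentrated in top degree: $\tilde{h}_j(M_\sigma;\k) = 0$ for all $j \neq \dim M_\sigma = r_M(\sigma) - 1$, since a shellable complex has the homotopy type of a wedge of spheres all of dimension equal to its top dimension $r_M(\sigma)-1$. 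I would invoke this together with the field-independence noted in Remark~\ref{field}.

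First I would pin down for which $i$ the number $\beta_{i,\sigma}$ can be nonzero. By the concentration of homology, $\tilde{h}_{|\sigma|-i-1}(M_\sigma) \neq 0$ forces $|\sigma| - i - 1 = r_M(\sigma)-1$, i.e. $i = |\sigma| - r_M(\sigma) = n(\sigma)$. So for each $\sigma$ there is at most one homological degree, namely $i = n(\sigma)$, in which $\beta_{i,\sigma}$ is nonzero; this already shows $\beta_{n(\sigma),\sigma} = (-1)^{r(\sigma)-1}\chi(M_\sigma)$ is the only candidate, because when homology sits in a single degree, the reduced Euler characteristic equals $\pm$ the single reduced Betti number. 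Concretely, $\tilde\chi(M_\sigma) = (-1)^{r(\sigma)-1}\tilde{h}_{r(\sigma)-1}(M_\sigma) = (-1)^{r(\sigma)-1}\beta_{n(\sigma),\sigma}$, giving the displayed Euler-characteristic formula once I reconcile the reduced and unreduced $\chi$ (the paper writes $\chi$; I would be careful about whether $\chi$ denotes the reduced Euler characteristic here, which is the natural reading given Hochster's reduced-homology formula).

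Next I would establish the equivalence $\beta_{i,\sigma} \neq 0 \Leftrightarrow \sigma$ is minimal in $N_i$. For the direction ($\Rightarrow$): if $\beta_{i,\sigma}\neq 0$ then by the above $i = n(\sigma)$, so $\sigma \in N_i$; I must then show $\sigma$ is minimal in $N_i$, i.e. no proper subset $\tau \subsetneq \sigma$ has $n(\tau) = i$. The cleanest route is to show that whenever $\sigma$ is \emph{not} minimal in $N_{n(\sigma)}$, its top homology vanishes. If $\tau \subsetneq \sigma$ with $n(\tau)=n(\sigma)$, one can delete an element of $\sigma\setminus\tau$ without dropping the nullity, which means $M_\sigma$ has an element lying in no circuit relative to the drop in rank --- more precisely $r(\sigma) = r(\sigma \setminus\{x\})+1$ is forced to fail, so $x$ is a coloop-type/cone point situation making $M_\sigma$ a cone, and a cone has trivial reduced homology. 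For ($\Leftarrow$): if $\sigma$ is minimal in $N_i$, then $i = n(\sigma)$ and I must show the top reduced Betti number $\tilde{h}_{r(\sigma)-1}(M_\sigma)$ is strictly positive; minimality says that deleting any element of $\sigma$ strictly decreases the nullity, which prevents $M_\sigma$ from being a cone over any vertex, and for matroid complexes non-coning forces the top homology (the number of spheres in the shelling) to be nonzero.

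I expect the main obstacle to be this last equivalence, specifically translating the combinatorial minimality condition in $N_i$ into the precise topological statement ``$M_\sigma$ is a cone $\Leftrightarrow$ top homology vanishes.'' The forward implication of the Betti-nonvanishing is easy once homology is known to be concentrated; the delicate part is proving that a matroid complex has nonzero top reduced homology exactly when no single element of the ground set is contained in every basis (equivalently, when the complex is not a cone), and then checking that ``$\sigma$ minimal in $N_{n(\sigma)}$'' is the same as ``$M_\sigma$ is not a cone.'' I would handle this by the characterization that $x$ is a cone point of $M_\sigma$ iff $x$ is a coloop of $M_\sigma$ iff $r(\sigma\setminus\{x\}) = r(\sigma)-1$ with $x$ in no circuit, i.e. iff $n(\sigma\setminus\{x\}) = n(\sigma)$, tying the topology directly back to the nullity and hence to minimality in $N_i$.
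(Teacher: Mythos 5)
Your proposal is correct and its first half coincides with the paper's argument: both use Hochster's formula together with the Bj\"orner shellability results to concentrate the homology of $M_\sigma$ in degree $r(\sigma)-1$, forcing $\beta_{i,\sigma}=0$ unless $i=n(\sigma)$ and identifying $\beta_{n(\sigma),\sigma}$ with $(-1)^{r(\sigma)-1}\chi(M_\sigma)$ (and yes, $\chi$ here is the reduced Euler characteristic). Both proofs also hinge on the same nontrivial topological input, namely that a matroid complex has vanishing top homology (equivalently $\chi=0$) exactly when it has an isthmus, i.e.\ a cone point; you invoke this just as the paper cites~\cite[Exerc.~7.39]{Bjorner1}. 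Where you genuinely diverge is the combinatorial bridge from ``no isthmus in $M_\sigma$'' to ``$\sigma$ is minimal in $N_i$.'' The paper translates ``no isthmus'' into ``$\sigma$ equals the union of its circuits'' and then appeals to the non-redundant-circuit machinery of Section~3 (Corollary~\ref{union_nonredundante} together with the proposition that the degree of non-redundancy equals the nullity). You instead argue directly: by monotonicity of the nullity, $\sigma$ is minimal in $N_{n(\sigma)}$ iff $n(\sigma\setminus\{x\})<n(\sigma)$ for every $x\in\sigma$, while $x$ is a coloop of $M_\sigma$ iff $r(\sigma\setminus\{x\})=r(\sigma)-1$ iff $n(\sigma\setminus\{x\})=n(\sigma)$; so minimality is exactly the absence of coloops. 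This is a cleaner and more elementary route that makes Theorem~\ref{prep} independent of Section~3, whereas the paper's route reuses machinery it has already built (and which it needs elsewhere). One slip to fix: in your middle paragraph you write that for a non-minimal $\sigma$ the equality $r(\sigma)=r(\sigma\setminus\{x\})+1$ ``is forced to fail'' --- it is forced to \emph{hold}: preserving nullity under deletion of $x$ means the rank drops, which is precisely what makes $x$ a coloop and $M_\sigma$ a cone; your final paragraph states the equivalence correctly, so this is a wording error rather than a gap in the argument.
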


\begin{proof} The matroid $M_\sigma$ has rank $r(\sigma)$, and thus by ~\cite[Th. 7.8.1]{Bjorner1}, we know that $M_\sigma$ might have reduced homology just in degree $r(\sigma)-1$.   We know that $\beta_{i,\sigma} = \tilde{h}_{|\sigma|-i-1}(M_\sigma,\k)$. So $\beta_{i,\sigma}=0$ except may be when $i=n(\sigma)$. In this case, we have by~\cite[Th 7.4.7 and 7.8.1]{Bjorner1}\[\beta_{i,\sigma} = \tilde{h}_{r(\sigma)-1}(M_\sigma,\k) = (-1)^{r(\sigma)-1}\chi(M_\sigma),\] for any field $\k$.\\ 
It remains to prove that this is non-zero if and only if $\sigma$ is minimal in $N_{i}$. It is well know that for a matroid $N$, $\chi(N) = 0$ if and only if $N$ has an isthmus, that is, if and only if $\overline{N}$ has a loop. This follows for example from  ~\cite[Exerc. 7.39]{Bjorner1}. But we have the equivalences: \begin{itemize} \item The matroid $\overline{N}$ has a loop, \item There exists an element which is in no base of $\overline{N}$, \item There exists an element which  is in all the  bases of $N$, \item There exists an element which is in no circuit of $N$, \item The underlying set of $N$ is not equal to the union of its circuits.  \end{itemize} And Corollary~\ref{union_nonredundante} just says that $\sigma$ is minimal in $N_i$ if and only if it is equal to the union of its circuits.  
\end{proof}

\begin{corollary} \label{betainfo}
\begin{itemize}
\item[a)]  Let $M$ be a matroid on the ground set $E$. Then \[\beta_{0,\sigma} = \left\{\begin{array}{ll} 1 & \text{ if } \sigma = \emptyset \\ 0 & \text{ otherwise }\end{array}\right.\] and \[\beta_{1,\sigma} =  \left\{\begin{array}{ll} 1 & \text{ if } \sigma \text{ is a circuit} \\ 0 & \text{ otherwise }\end{array}\right.\]
\item[b)] The resolution has length exactly $k=|E|-r(M)$, that is: $N_k \ne 0$, but $N_i =0$, for $i >k.$
\end{itemize}
\end{corollary}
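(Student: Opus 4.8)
The plan is to read off both parts directly from Theorem~\ref{prep}, which asserts that $\beta_{i,\sigma} \neq 0$ precisely when $\sigma$ is minimal in $N_i$ for inclusion, together with the closed formula $\beta_{n(\sigma),\sigma} = (-1)^{r(\sigma)-1}\chi(M_\sigma)$. Everything then reduces to describing the minimal elements of $N_0$ and $N_1$, and to locating the range of the nullity function.

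For part a), I would first recall that $N_0 = \mathcal{I}$ is the set of independent sets. Since $\emptyset$ is independent and is contained in every independent set, it is the unique minimal element of $N_0$; hence $\beta_{0,\sigma} \neq 0$ forces $\sigma = \emptyset$, and the normalization $\beta_{0,\emptyset} = 1$ (equivalently $\tilde{h}_{-1}(M_\emptyset;\k) = 1$) supplies the stated value. For $\beta_{1,\sigma}$ I would identify the minimal elements of $N_1$: a set has nullity $1$ exactly when it is dependent of corank one, and the minimal such sets are precisely the circuits, since a circuit $\tau$ has $n(\tau)=1$ while every proper subset is independent, and conversely any set of nullity one contains a circuit. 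Thus $\beta_{1,\sigma}\neq 0$ if and only if $\sigma$ is a circuit. To pin the value at $1$, I would apply the formula with $n(\sigma)=1$: for a circuit $\sigma$ the faces of $M_\sigma$ are exactly the proper subsets of $\sigma$, so its independence complex is the boundary of the simplex on vertex set $\sigma$, a sphere $S^{|\sigma|-2}$, whose top reduced homology is one-dimensional; hence $\beta_{1,\sigma} = \tilde{h}_{r(\sigma)-1}(M_\sigma;\k) = 1$. Alternatively one can bypass this computation entirely by noting that $\beta_{1,\sigma}$ counts the minimal generators of $I_\Delta$ lying in multidegree $\sigma$, which are exactly the minimal non-faces of the matroid, that is, the circuits, each occurring once.

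For part b), I would argue that the length of the resolution~(\ref{SR}) is the largest $i$ for which some $\beta_{i,\sigma}$ is nonzero, and that by Theorem~\ref{prep} this equals the largest $i$ with $N_i \neq \emptyset$. Since the nullity function is monotone and attains its maximum at the whole ground set, with $n(E) = |E| - r(M) = k$, every subset has nullity at most $k$; hence $N_i = \emptyset$ for $i > k$, while $E \in N_k$ shows $N_k \neq \emptyset$. Therefore a minimal element of $N_k$ contributes a nonzero $\beta_{k,\sigma}$, and there are no nonzero Betti numbers in higher homological degree, so the resolution has length exactly $k$.

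The arguments are short once Theorem~\ref{prep} is available; the only place asking for a little care is confirming that the nonzero values in part a) are exactly $1$. For $\beta_{1,\sigma}$ this amounts to recognizing the independence complex of a circuit as a sphere (or, more cheaply, invoking the description of the minimal generators of the Stanley-Reisner ideal as the minimal non-faces), which I expect to be the main, though quite modest, obstacle.
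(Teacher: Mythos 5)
Your proof is correct and follows essentially the same route as the paper: part a) is read off from Theorem~\ref{prep} by identifying the minimal elements of $N_0$ and $N_1$ (the paper dismisses this as ``immediate,'' with your sphere/minimal-generator argument for the value $1$ matching the standard interpretation the paper records in Remark~\ref{CM}), and part b) is exactly the paper's observation that $n(E)=|E|-r(M)=k$ is the maximal value of the nullity function, so $N_k\neq\emptyset$ while $N_i=\emptyset$ for $i>k$. You have merely supplied the details the paper leaves implicit.
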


\begin{proof}$ $
\begin{itemize}
\item[a)] is immediate from Theorem~\ref{prep}.
\item[b)] There exists a $\sigma$ such that $|\sigma| - rk(\sigma)=n-r$ (for example $\sigma =E$) but no $\sigma$ with $|\sigma| - r_M(\sigma) >n-r$. Hence $N_{n-r} \ne  \emptyset$, but $N_i=  \emptyset$ if $i > n-r$.
\end{itemize}
\end{proof}
\begin{corollary} A matroid $M$ is entirely given by its $\N^E$-graded Betti numbers in homology degree 1. Namely, we have \[ \mathcal{C}_M = \left\{\sigma \in E:\ \beta_{1,\sigma}=1\right\}.\]
\end{corollary}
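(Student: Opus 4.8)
The plan is to read the statement off almost directly from the information already established in Corollary~\ref{betainfo}. First I would invoke part a) of that corollary, which records precisely that $\beta_{1,\sigma} = 1$ when $\sigma$ is a circuit and $\beta_{1,\sigma} = 0$ otherwise. This dichotomy yields the set equality
\[\mathcal{C}_M = \{\sigma \subseteq E:\ \beta_{1,\sigma} = 1\}\]
at once: the subsets on which the homology-degree-$1$ finely graded Betti number is nonzero are exactly the circuits, and on those it takes the value $1$. So the first half of the corollary (the explicit formula for $\mathcal{C}_M$) requires no new argument beyond citing the earlier result.

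The second step is to argue that recovering the collection $\mathcal{C}_M$ is enough to recover the entire matroid $M$, which is the content of the phrase ``entirely given''. For this I would appeal to the standard reconstruction principle from matroid theory (see~\cite{1Oxley}): a matroid is uniquely determined by its set of circuits, since the independent sets are exactly those subsets of $E$ containing no circuit, namely $\mathcal{I}_M = \{\sigma \subseteq E:\ \tau \not\subseteq \sigma \text{ for all } \tau \in \mathcal{C}_M\}$. Because the $\N^E$-graded Betti numbers in homological degree $1$ thus pin down $\mathcal{C}_M$, and $\mathcal{C}_M$ in turn pins down $\mathcal{I}_M$ (and hence $r_M$, $\mathcal{B}_M$, etc.), these Betti numbers carry the full combinatorial data of $M$.

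I expect no genuine obstacle here: the entire nontrivial content lives in Corollary~\ref{betainfo} a), itself a consequence of Theorem~\ref{prep} and the identification of circuits with the minimal elements of $N_1$. The only remaining care is to state clearly the elementary matroid fact that circuits determine independence, so that the word ``entirely'' is justified rather than merely asserted.
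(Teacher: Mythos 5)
Your proposal is correct and matches the paper's (implicit) argument exactly: the paper states this corollary with no separate proof precisely because it follows immediately from Corollary~\ref{betainfo} a) together with the standard fact that a matroid is determined by its set of circuits. Your added care in spelling out that $\mathcal{I}_M$ consists of the subsets containing no circuit is exactly the right justification for the word ``entirely'' and introduces nothing beyond what the paper intends.
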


\begin{remark} \label{CM}Part a) of corollary~\ref{betainfo} is just the standard interpretation of Betti numbers in terms of minimal generators of the ideal, which correspond to circuits of the matroid.

 Part b) of Corollary \ref{betainfo} is not new. It shows that the projective dimension of $R$ as an $S$-module is $\dim S - \dim R =n-r$. In \cite[Theorem 3.4]{Stan} it is shown that the Stanley-Reisner ring $R$ of a matroidal simplicial complex is level, in particular it is a Cohen-Macaulay graded algebra over $k$, and then the projective dimension is $n-r$. That $R$ is level also means that the rightmost term $P_{n-r}$ of its minimal resolution is pure, that is of the form $S(-b)^a$ for some non-negative integers $a,b$.
\end{remark}

We are now able to give and prove the following relation between $\N$-graded Betti numbers of a matroid $M$ and its Hamming weights:

\begin{theorem} \label{main}
Let $M$ be a matroid on the ground set. Then the generalized Hamming weights are given by \[d_i = \min\{d:\ \beta_{i,d} \neq 0\} \text{ for } 1\leqslant i \leqslant |E| - r(M).\]
\end{theorem}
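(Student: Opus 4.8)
The plan is to reduce everything to Theorem~\ref{prep}, which already pins down exactly where the $\N^E$-graded Betti numbers are nonzero. Since $\beta_{i,d}=\sum_{|\sigma|=d}\beta_{i,\sigma}$ is a sum of non-negative integers, we have $\beta_{i,d}\neq 0$ if and only if there exists some $\sigma\subset E$ with $|\sigma|=d$ and $\beta_{i,\sigma}\neq 0$. By Theorem~\ref{prep} this happens precisely when there is a $\sigma$ of cardinality $d$ that is minimal for inclusion in $N_i=n^{-1}(i)$. Hence $\min\{d:\ \beta_{i,d}\neq 0\}$ is nothing but the smallest cardinality of an inclusion-minimal element of $N_i$, and the theorem becomes the purely combinatorial claim that this smallest cardinality equals $d_i=\min\{|\sigma|:\ \sigma\in N_i\}$.

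First I would prove $\min\{d:\ \beta_{i,d}\neq 0\}\leqslant d_i$. Choose $\sigma_0\in N_i$ realizing $|\sigma_0|=d_i$. I claim $\sigma_0$ is automatically minimal for inclusion in $N_i$: if some $\tau\subsetneq\sigma_0$ had $n(\tau)=i$, then $\tau\in N_i$ with $|\tau|<d_i$, contradicting the definition of $d_i$ as the minimal cardinality attained in $N_i$. Thus $\beta_{i,\sigma_0}\neq 0$ by Theorem~\ref{prep}, whence $\beta_{i,d_i}\neq 0$, giving this inequality.

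For the reverse inequality, note that every inclusion-minimal element $\sigma$ of $N_i$ is in particular a member of $N_i$, so $|\sigma|\geqslant d_i$ by the very definition of $d_i$. Therefore the smallest cardinality of an inclusion-minimal element of $N_i$ is at least $d_i$, that is, $\min\{d:\ \beta_{i,d}\neq 0\}\geqslant d_i$. Combining the two inequalities yields the asserted equality for each $i$ in the range $1\leqslant i\leqslant |E|-r(M)$, the range being exactly the one in which $N_i\neq\emptyset$ by Corollary~\ref{betainfo}b).

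I expect this last step to be essentially formal once Theorem~\ref{prep} is in hand; the real content of the result lives there, supported by the identification $deg(\sigma)=n(\sigma)$ and Hochster's formula. The only point needing a moment's care is the observation that a cardinality-minimal member of $N_i$ is automatically inclusion-minimal in $N_i$. This is precisely what allows the coarsely graded Betti number $\beta_{i,d_i}$ to detect the weight $d_i$, even though $\beta_{i,\sigma}$ records only inclusion-minimality and not minimal cardinality; without it one could only conclude that the support of $\beta_{i,\bullet}$ lies weakly above $d_i$, not that $d_i$ itself occurs.
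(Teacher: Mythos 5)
Your proof is correct and takes essentially the same route as the paper's own argument: both reduce the statement to Theorem~\ref{prep} and establish the two inequalities by noting that $\beta_{i,d}\neq 0$ exactly when some $\sigma$ of cardinality $d$ is inclusion-minimal in $N_i$. The only difference is that you spell out explicitly the observation (left implicit in the paper) that a cardinality-minimal member of $N_i$ is automatically inclusion-minimal, which is indeed the small point on which the whole reduction turns.
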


\begin{proof} Let $\sigma$ minimal such that $n(\sigma)=i$. Then $\beta_{i,\sigma} \neq 0$ which implies $\beta_{i,d_i} = \beta_{i,|\sigma|} \neq 0$, and thus \[d_i \geqslant \min\{d:\ \beta_{i,d} \neq 0\}.\] Let now $d$ minimal such that $\beta_{i,d} \neq 0$. This means that there exists a subset $\sigma$ of $E$ of cardinality $d$ such that $\beta_{i,\sigma} \neq 0$. Then $\sigma$ is (minimal) in $N_d$, and thus \[d_i \leqslant \min\{d:\ \beta_{i,d} \neq 0\}.\] 
\end{proof}

\begin{corollary} 
Let $M$ be a matroid on the ground set $E$, of rank $r$. Then \[d_{|E|-r} = \left|\bigcup_{\tau \in \mathcal{C}} \tau\right| = |E|-|\{\text{loops of }\overline{M}\}|.\]
\end{corollary}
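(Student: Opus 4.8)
The plan is to apply the main theorem with $i = |E| - r = k$, the top value of the index, and read off the formula from the structural facts already established. By Theorem~\ref{main}, we have $d_{|E|-r} = \min\{d : \beta_{k,d} \neq 0\}$. First I would note that by part b) of Corollary~\ref{betainfo}, the set $N_k$ is nonempty, so such a $d$ exists; moreover $k$ is the top homological degree, so the nullity $n(\sigma) = k$ is the maximal possible value, attained precisely when $r(\sigma) = r$ and $\sigma$ is as large as possible.

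The key observation is that at this top level, minimality in $N_k$ for inclusion pins down a \emph{unique} subset. Indeed, a subset $\sigma$ with $n(\sigma) = k = |E| - r$ must satisfy $|\sigma| - r(\sigma) = |E| - r$, and since $r(\sigma) \leqslant r$ always, this forces $r(\sigma) = r$ and $|\sigma| = |E| - r + r(\sigma) = |E|$; wait---that would give $\sigma = E$ only if $r(\sigma)=r$, which holds. So the only candidate of full nullity is constrained, but the relevant minimal elements of $N_k$ are those that are unions of their circuits. By Theorem~\ref{prep}, $\beta_{k,\sigma} \neq 0$ exactly when $\sigma$ is minimal in $N_k$, equivalently (via the chain of equivalences in the proof of Theorem~\ref{prep}) when $\sigma$ equals the union of the circuits it contains. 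By Corollary~\ref{union_nonredundante}, a maximal non-redundant family of circuits has union $\bigcup_{\tau \in \mathcal{C}} \tau$, and the degree-equals-nullity Proposition shows this common union has nullity exactly $k$ and is minimal with that property. Hence the unique minimal element of $N_k$ is $\bigcup_{\tau \in \mathcal{C}} \tau$, giving $d_{|E|-r} = \left|\bigcup_{\tau \in \mathcal{C}} \tau\right|$.

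For the second equality, I would translate the complement of $\bigcup_{\tau \in \mathcal{C}} \tau$ into the language of the dual matroid using the equivalences displayed inside the proof of Theorem~\ref{prep}. An element $e \in E$ lies outside every circuit of $M$ if and only if $e$ is in every basis of $M$ (a coloop/isthmus), which holds if and only if $\overline{e}$-complementarily $e$ is a loop of $\overline{M}$. Thus the elements not covered by $\bigcup_{\tau \in \mathcal{C}} \tau$ are exactly the loops of $\overline{M}$, whence $\left|\bigcup_{\tau \in \mathcal{C}} \tau\right| = |E| - |\{\text{loops of } \overline{M}\}|$.

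I expect the main subtlety to be the uniqueness argument showing that $N_k$ has a single minimal element, namely that any minimal $\sigma \in N_k$ must coincide with the full circuit-union; this is where Corollary~\ref{union_nonredundante} and the nullity/degree Proposition do the real work, ruling out smaller full-nullity sets. The dual-matroid reformulation in the last paragraph is routine once the equivalences from the proof of Theorem~\ref{prep} are invoked, so the crux is genuinely the identification of the minimal element rather than the bookkeeping with loops and coloops.
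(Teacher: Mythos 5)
Your overall strategy --- apply Theorem~\ref{main} at the top index $i = k = |E|-r$, identify the unique minimal element of $N_k$ as $\bigcup_{\tau\in\mathcal{C}}\tau$, and then translate its complement into loops of $\overline{M}$ --- is exactly the route the paper intends (the corollary is stated there without proof, as an immediate consequence of Theorems~\ref{main} and~\ref{prep}). Your last paragraph, identifying the elements outside $\bigcup_{\tau\in\mathcal{C}}\tau$ with the coloops of $M$, i.e.\ the loops of $\overline{M}$, is correct and routine.

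There are, however, two problems in your middle paragraph. First, the claim ``since $r(\sigma)\leqslant r$ always, this forces $r(\sigma)=r$ and $|\sigma|=|E|$'' is false: $n(\sigma)=k$ does \emph{not} force $r(\sigma)=r$. If $M$ has a coloop $e$, then $\sigma = E-\{e\}$ has $r(\sigma)=r-1$ and $n(\sigma)=k$, so $N_k$ contains proper subsets of $E$; indeed, whenever $\overline{M}$ has loops the minimal element of $N_k$ is strictly smaller than $E$ --- that is the whole point of the corollary. You signal doubt (``wait---''), but what remains still reads as if $\sigma=E$ were forced, which would yield the wrong conclusion $d_k=|E|$ in general. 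Second, the step that genuinely carries the proof --- that $U:=\bigcup_{\tau\in\mathcal{C}}\tau$ lies in $N_k$, is minimal there, and is contained in \emph{every} element of $N_k$ --- is asserted rather than derived from the results you cite. The missing argument is short: given any $\sigma$ with $n(\sigma)=k$, the degree-equals-nullity Proposition yields $k$ non-redundant circuits inside $\sigma$; since $\deg(E)=n(E)=k$, this family cannot be enlarged, so it is a maximal non-redundant family, and Corollary~\ref{union_nonredundante} forces its union to be $U$; hence $U\subseteq\sigma$. Applied to $\sigma=E$ this gives $n(U)\geqslant \deg(U)\geqslant k$, so $U\in N_k$; and the containment $U\subseteq\sigma$ for all $\sigma\in N_k$ gives both minimality and uniqueness of $U$, whence $d_k=|U|$. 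With that insertion (and the false first half of the paragraph deleted), your proof is complete and coincides with the paper's intended one.
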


\begin{remark} \label{hs}
When $M=M_H$, the matroid of some parity check matrix for a linear code $C$, this number is just the cardinality of the support of $C$, since each loop of $\overline{M}=M_G$ for any generator matrix $G$ of $C$, corresponds to a coordinate position where all code-words are zero.

As another comment, not directly related to coding, we add that since $R$ (see Remark~\ref{CM}) is level, we have by \cite[Prop. 3.2,f.]{Stan} that $P_{n-r}=S(-d_{n-r})^{h_s}$, where $s$ is maximal such that $h_s \neq 0$. For a Cohen-Macaulay Stanley-Reisner ring the $h_i$ can be defined by $\sum_{i=0}^r f_{i-1}(t-1)^{r-i}= \sum_{i=0}^r h_it^{d-i},$ where $f_i$ is the number of independent sets of cardinality $i+1$ in the matroid $M$ (See \cite[Formula (1)]{Eagon1}). Here $s \le n-r$, and we see that \[h_{n-r}=\sum_{i=0}^r f_{i-1}(-1)^{r-i}.\]  The vector $(h_0,\ldots,h_s)$ is called the $h$-vector of the simplicial complex.
\end{remark}

\begin{example} \label{uniformexample}
It is well known (see e.g. \cite[Text following the proof of Lemma 5.1]{SW}) that the Betti diagram of the uniform matroid $U(r,n)$ corresponding to MDS-codes of length $n$ and dimension $k=n-r$ (since we are studying the rank function of the parity check matrix/matroid) is \[\begin{array}{c|ccccc} & 1&\cdots & s & \cdots& n-r\\\hline  r & \binom{r}{r}\binom{n}{r+1} & \cdots & \binom{r+s-1}{r}\binom{n}{r+s} & \cdots & \binom{n-1}{r}\binom{n}{n}
\end{array}\] Hence the weight hierarchy is $\{n-k+1,\ldots,n-1,n\}.$ We see from the rightmost part of the resolution that $h_{n-r}=\binom{n-1}{r}$, while \[f_{r-1}-f_{r-2}+\ldots+(-1)^{r}f_{-1}=\binom{n}{r}-\binom{n}{r-1}+\ldots+(-1)^{r}\binom{n}{0},\] which is also $\binom{n-1}{r}$.
\end{example}

\begin{example} \label{63example} For our running example (Example~\ref{runningexample}), using~\cite{BCP}, we get the Betti diagram \[\begin{array}{c|ccc} & 1&2&3 \\\hline 1 &1\\ 2&3&2\\ 3&2&7&4\end{array}\] Hence the $d_i$ are $2,4,6.$
\end{example}

\subsection{Negative results for the converse and for global Betti numbers}
We will now give examples showing that the converse of Theorem~\ref{main} doesn't hold, in the sense that the generalized Hamming weights of a matroid do not determine its $\N$-graded Betti-numbers. Moreover it doesn't hold even if we replace $\N$-graded Betti numbers by global Betti numbers. While the theory is valid for matroids in general, and thus for (parity check) matroids defined by linear codes in particular, the negative examples that we give, all come from codes.

We start by giving an example showing that the weight hierarchy of a code doesn't in general determine the global Betti numbers (and therefore not the $\N$-graded Betti numbers either).

\begin{example} \label{counter}
Consider the binary non-degenerate $[4,2]$-codes with parity check matrices \[H_2=\begin{pmatrix}1&0&1&1\\0&1&1&1\end{pmatrix}\] and \[H_3=\begin{pmatrix}1&0&0&1\\0&1&1&0\end{pmatrix}.\] Their associated matroids are on $E=\{1,2,3,4\}$ with basis sets  \[\mathcal{B}_{2} = \{\{1,2\},\{1,3\},\{1,4\},\{2,3\},\{2,4\}\}\] and \[\mathcal{B}_{3} = \{\{1,2\},\{1,3\},\{2,4\},\{3,4\}\}\] respectively. Their weight hierarchy is $(2,4)$, while their global Betti numbers are $(1,3,2)$ and $(1,2,1)$ respectively.
\end{example}

The next example shows that two codes with the same global Betti numbers might have different weight hierarchies.

\begin{example}\label{rque44}
Consider the binary non-degenerate $[4,2]$-codes with parity check matrices \[H_4=\begin{pmatrix}1&0&0&0\\0&1&1&1\end{pmatrix}\] and \[H_5=\begin{pmatrix}1&1&0&1\\0&1&1&1\end{pmatrix}.\]Their associated matroids are on $E=\{1,2,3,4\}$ with basis sets \[\mathcal{B}_4 = \{\{1,2\},\{1,3\},\{1,4\}\}\] and \[\mathcal{B}_5=\{\{1,2\},\{1,3\},\{1,4\},\{2,3\},\{3,4\}\}\] respectively. Their global Betti numbers are $(1,3,2)$ while their weight hierarchies are $(2,3)$ and $(2,4)$ respectively.
\end{example}

\subsection {Alexander duality}

From Wei duality generalized to matroids (\cite{BJMS}) it is clear that the higher weight hierarchy of a matroid $M$ determines and is determined by that of its matroid dual $\overline{M}$. But since $M$ is also  a simplicial complex, through its set $\I_M$ of independent sets, there is also another duality, Alexander duality, which comes into play. Since~\cite{Eagon1}, it is well known that for the underlying simplicial complex of $M$, the $\N$-graded resolution of the Stanley-Reisner ring of the Alexander dual complex, $M^\star$, has a particularly nice form. Indeed, as $M$ is Cohen-Macaulay, the resolution is linear and of the form \[0 \longleftarrow R_{M^\star} {\longleftarrow} S  {\longleftarrow} P_1 \longleftarrow \ldots {\longleftarrow} P_l\longleftarrow 0\] where each $P_i$ is of the form \[P_i = S(-r(\overline{M})-1-i)^{\beta_{i}},\] for some $l$.

We thus study the minimal resolutions of the Stanley-Reisner rings of the Alexander duals $M^\star$ or ${\overline{M}}^\star$ and investigate whether the Betti numbers $\beta_i$ of such resolutions determine the higher weight hierarchy of $M$ (and $\overline{M}$).

Unfortunately it is clear however that even for two matroids of the same cardinality $n$ and rank $k$ the Betti-numbers of $M^\star$ and/or ${\overline{M}}^\star$ do not in general determine the higher weight hierarchy. Indeed, Formula $1$ and the second part of Theorem 4 of~\cite{Eagon1} show that determining the Betti numbers of the Alexander dual of the matroid is equivalent to finding the $f$-vector of the matroid. Finding the $f_j$, although an important characterization of a matroid, is not enough to find the higher weights of a code giving rise to the matroid (if such a code exists).

An even more important characterization of a matroid is its Whitney polynomial \[W(x,y) = \sum_{X \subset E}x^{r(E)-r(X)}y^{|X|-r(X)}.\]

The information about the $f_i$ can then be read off from the coefficients of the pure $x$-part $W(x,0)$ of the Whitney polynomial. How the $d_i$ can be read off is described in~\cite[p. 131]{D}.

\begin{example}For our running example (Example~\ref{runningexample}), its Alexander dual has the facets \[\{\{1,2,3\},\{1,4,6\},\{1,5\},\{2,3,4,5\},\{2,3,6\},\{5,6\}\},\] and its Betti diagram is linear
\[\begin{array}{c|cccc}&1&2&3&4\\\hline 2&13&25&17&4\end{array}.\] 
The Whitney polynomial of the matroid $M_1$ is \[W(x,y)=x^3+x^2y+6x^2+xy^2+7xy+14x+y^3+6y^2+14y+13.\] Then \[W(x,0)= x^3+6x^2+14x+13\] and from there, we can read that \[(d_1,d_2,d_3) = (2,4,6) \textrm{ and } (f_0,f_1,f_2)=(6,14,13).\]\end{example}
 Even if the procedures for reading off the $d_j$ from $W(x,y)$ are very different, it could of course a priori happen that the $\beta_i$ determined the $d_j$. The following example shows, however, that  Alexander duals of matroids $M$ may have the same $\beta_i$, but different $d_j$

\begin{example}\label{exa51}
Consider the non-degenerate $[6,3]$-code over $\F{5}$ with parity check matrices \[H_8=\begin{pmatrix}0  & 1 & 0 & 0 & 1 & 1 \\1  & 0 & 1 & 0 & 1 & 1 \\0  & 0 & 1 & 1 & 0 & 1\end{pmatrix}\]and \[H_9=\begin{pmatrix}0  & 1 & 0 & 1 & 1 & 1 \\0  & 0 & 1 & 1 & 2 & 3 \\1  & 0 & 0 & 0 & 0 & 1\end{pmatrix}\]Their associated matroids $M_8=M_{H_8}$ and $M_9=M_{H_9}$ are on $E=\{1,2,3,4,5,6\}$ with the basis sets  \[\B_8=\{\{1,2,3\}, \{1,2,4\},\{1,2,6\},\{1,3,5\},\{1,3,6\},\{1,4,5\},\{1,4,6\},\{1,5,6\},\]
\[\{2,3,4\},\{2,3,5\},\{2,4,5\},\{2,4,6\},\{2,5,6\},\{3,4,5\},\{3,4,6\},\{3,5,6\} \}\]  and
\[\B_9=\{\{1,2,3\}, \{1,2,4\},\{1,2,5\},\{1,2,6\},\{1,3,4\},\{1,3,5\},\{1,3,6\},\{1,4,5\},\]
\[\{1,4,6\},\{1,5,6\},\{2,3,6\},\{2,4,6\},\{2,5,6\},\{3,4,6\},\{3,5,6\},\{4,5,6\} \}\] 
respectively. They both give rise to the following Betti diagram for $R_{(M_i)^\star}$, $i \in \{8,9\}$ \[\begin{array}{c|cccc} & 1&2&3&4 \\\hline 2& 16&33&24&6\end{array}\] while their respective weight hierarchies are $(3,5,6)$ and $(2,5,6)$.
\end{example}

Also it is clear from the following very simple examples that the $d_j$ do not in general determine the $\beta_i$ (of the Alexander dual of $M$):

\begin{example}\label{exa52}
Consider the codes from Example~\ref{counter}. We have already seen that they have the same weight hierarchies. But 
the Stanley-Reisner rings of the Alexander dual simplicial complexes $M_2^\star$ and $M_3^\star$ give rise to the Betti diagrams \[\begin{array}{c|ccc} & 1&2&3 \\\hline 1&5&6&2\end{array}\] and \[\begin{array}{c|ccc} & 1&2&3 \\\hline 1&4&4&1\end{array}\] respectively.
\end{example}

\section{Consequences of the previous results}

In this section we derive and study some consequences of Theorem~\ref{main}. Recall that a linear code $C$ of length $n$ and dimension $k$ is called $h$-MDS if its higher support weight $d_h$ satisfies $d_h=n-k+h$ for some $h \in \{1,\ldots,k\}.$ If $C$ is $h$-MDS, then $C$ is $i$-MDS for all $i \in \{h,\ldots,k\}.$ We recall that  $C$ is called MDS if $C$ is $1$-MDS, that is $d=n-k+1$. For a length $n$ and dimension $k$ code, the matroid $M=M(C^\perp)$ has rank $r=n-k$ and is equipped with invariants $d_1,\ldots, d_k$ which have the same values as the support weights of $C$. For the resolution (\ref{SR}) of the Stanley-Reisner ring $R$ of (the simplicial complex of independent sets of) $M$ we then have:

\begin{corollary} \label{rMDS}We have the following:
\begin{itemize}
\item[a)] $C$ is $h$-MDS if and only if the right part \[P_h {\longleftarrow} P_{h+1} {\longleftarrow} \ldots {\longleftarrow} P_k \]of the resolution is linear, and $M$ has no isthmus.
\item[b)] $C$ is MDS if and only if the entire resolution is linear (that is linear from $P_1$ and rightover), and $M$ has no isthmus ($C$ is non-degenerate). 
\item [c)] If $C$ is non-degenerate, then it is MDS if and only if  the Alexander dual of $M$ is also (the set of independent sets of) a matroid.
\end{itemize}
\end{corollary}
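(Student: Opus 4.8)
The plan is to recover the MDS conditions from the positions of the nonzero $\N$-graded Betti numbers, using Theorem~\ref{main} together with one additional degree bound. First I would record the bound: if $\beta_{i,\sigma}\neq 0$ then $\sigma$ is minimal in $N_i$ by Theorem~\ref{prep}, so $n(\sigma)=i$ and $|\sigma|=r(\sigma)+i\leqslant r+i$ with $r=r(M)$; summing over $|\sigma|=d$ gives $\beta_{i,d}=0$ for all $d>r+i$. Together with Theorem~\ref{main}, which gives $d_i=\min\{d:\ \beta_{i,d}\neq 0\}$, this confines the nonzero entries of column $i$ to degrees $d_i\leqslant d\leqslant r+i$. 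Hence column $i$ is concentrated in the single top-diagonal degree $r+i$ if and only if $d_i=r+i$, i.e. (as $r=n-k$) if and only if $C$ is $i$-MDS.

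For a) I would argue both implications. If $C$ is $h$-MDS, then $d_i=r+i$ for all $i\geqslant h$ by the propagation property recalled just before the corollary, so by the previous paragraph every $P_i$ with $i\geqslant h$ is pure of degree $r+i$ and these degrees increase by one: the strand $P_h\longleftarrow\cdots\longleftarrow P_k$ is linear; moreover $d_k=r+k=n$, so $M$ has no isthmus by the corollary expressing $d_{|E|-r}$ as $|E|-|\{\textrm{loops of }\overline{M}\}|$. Conversely, linearity forces each $P_i$ ($i\geqslant h$) to be pure of some degree $a_i=d_i$ with $a_{i+1}=a_i+1$, while no isthmus gives $d_k=n=r+k$; stepping down yields $d_i=r+i$, so $C$ is $h$-MDS. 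Part b) is the case $h=1$, using Remark~\ref{hs} to identify ``$M$ has no isthmus'' with ``$C$ is non-degenerate''.

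For c) I would assume $C$ non-degenerate, so that by b) the assertion ``$C$ is MDS'' becomes ``$I_M$ has a linear resolution''. For the forward direction, $C$ MDS gives $d_1=r+1$, hence the smallest circuit has size $r+1$; since every circuit of a rank-$r$ matroid has size at most $r+1$, all circuits have size $r+1$ and $M=U(r,n)$, whose dependent sets are the subsets of size $\geqslant r+1$, so $M^\star=\{\tau:\ |\tau|\leqslant n-r-1\}=U(n-r-1,n)$ is a matroid. For the converse I would invoke the Eagon--Reiner theorem~\cite{Eagon1}: $I_M$ has a linear resolution if and only if $M^\star$ is Cohen--Macaulay; since a matroid complex is Cohen--Macaulay, $M^\star$ being a matroid yields a linear resolution of $I_M$, and b) then gives MDS.

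Two points need care. In a) and b) the no-isthmus hypothesis is indispensable: linearity alone only gives $d_i=d_k-(k-i)$, a strand that may sit strictly below the top diagonal when $M$ has isthmuses (for instance the direct sum of a uniform matroid with a coloop is fully linear but not MDS), and no isthmus is precisely what anchors the strand at $d_k=n$. The main obstacle, however, is the converse of c): a direct combinatorial proof that $M^\star$ a matroid forces $M$ uniform would require upgrading purity of $M^\star$ (equal circuit sizes in $M$) to uniformity through the basis-exchange axiom of $M^\star$, which is delicate. Routing instead through Eagon--Reiner sidesteps this, since only Cohen--Macaulayness of $M^\star$---far weaker than being a matroid---is needed to produce the linear resolution, after which b) delivers MDS.
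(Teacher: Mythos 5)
Your proof is correct, and its overall architecture matches the paper's: part c) is argued exactly as in the paper (MDS $\Rightarrow M=U(r,n)\Rightarrow M^\star=U(k-1,n)$ is a matroid; the converse routed through Eagon--Reiner plus Cohen--Macaulayness of matroid complexes, which the paper merely phrases contrapositively via \cite[Proposition 7]{Eagon1}). The genuine difference is the key lemma behind a) and b). Where you need an upper bound on the degrees occurring in homological degree $i$, the paper invokes the fact that $R$ is level (\cite[Theorem 3.4]{Stan}), hence Cohen--Macaulay, so that the maximal shifts $\mu_i=\max\{j:\ \beta_{i,j}\neq 0\}$ form a strictly increasing sequence (\cite[Prop. 1.1]{BH}), and then anchors this chain at $\mu_k=n$ using the no-isthmus hypothesis, obtaining $\mu_i\leqslant n-k+i$ only under that hypothesis. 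You instead derive $\beta_{i,d}=0$ for $d>r+i$ unconditionally and purely combinatorially from Theorem~\ref{prep}: $\beta_{i,\sigma}\neq 0$ forces $n(\sigma)=i$, hence $|\sigma|=r(\sigma)+i\leqslant r+i$. This is a real simplification: it eliminates the appeal to levelness and to Bruns--Hibi, keeps the entire argument inside results already proved in the paper, and makes transparent that the no-isthmus hypothesis enters only to pin the last column at $d_k=n$ (your coloop example, which is essentially the paper's Example~\ref{isthmusexample}, shows this hypothesis cannot be dropped). Both approaches then conclude identically: concentration of each $P_i$, $i\geqslant h$, in the single degree $r+i$ is equivalent to $d_i=r+i$, with the propagation ``$h$-MDS $\Rightarrow$ $i$-MDS for $i\geqslant h$'' giving the forward direction and the unit degree steps plus $d_k=n$ giving the converse.
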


\begin{proof}
It is clear from Corollary~\ref{betainfo} and the fact that $N_i=\emptyset$ for all $i >k=|E|-rk(E)$ that $\beta_{i,j}=0$ for all $j$ for $i >k$. From Theorem~\ref{main} it is clear that $d_h=n-k+h$ if and only if $\beta_{i,j}=0$ for $j < n-k+i$, and $\beta_{i,n-k+i} \ne 0,$ for each $i \in \{h,h+1,\ldots,k\}$ (in particular  for $i=h$, and thus implying the statements for the remaing $i$, by standard facts from coding theory).  Since there is no facet with more than $n$ elements it is clear that $\beta_{k,j}=0$ for $j > n.$  Let \[\nu_i=\min \{j : \beta_{i,j} \ne 0 \}=d_i \textrm{ and } \mu_i=\max \{j : \beta_{i,j} \ne 0 \}.\] We have just shown that $\nu_k=\mu_k=n$ if and only if  $d_k=n$. Since by \cite[Theorem 3.4]{Stan} $R$ is a level ring, in particular Cohen-Macaulay, it is true that the $\mu_i$ form a strictly increasing sequence (See \cite[Prop. 1.1]{BH}). It is therefore  also clear that if $M$ has no isthmus, that is $d_k=n$, then $\mu_i \le \mu_{k}-[k-i]=n-k+1.$ Hence $\beta_{i,j}=0$ for $j > n-k+i.$   This gives a). The statement of b) is just the special case $h=1$  of a).\\
 It is clear that if $C$ is an $[n,k]$ MDS-code, then $M$ is the uniform matroid $U(r,n)$, and then its Alexander dual is the matroid $U(k-1,n)$. If on the other hand $C$ is not MDS, then the Stanley-Reisner ring does not have a linear resolution, and then it's not the Alexander dual of any matroid (by \cite[Proposition 7]{Eagon1}).
\end{proof}

\begin{example} \label{isthmusexample}$ $
\begin{itemize}
\item Let $C$ be the linear binary $[6,3]$-code with parity check matrix \[H_6=\begin{pmatrix}1  & 0 & 0 & 1 & 1 & 0 \\0  & 1 & 0 & 0 & 1 & 1 \\0  & 0 & 1 & 1 & 0 & 1\end{pmatrix}\] For the matroid $M_6=M_{H_6}$ we have $E=\{1,2,3,4,5,6\}$ and maximal independent sets \[\B_6=\{\{1,2,3\}, \{1,2,4\}, \{ 1,2,6\}, \{ 1,3,5\}, \{ 1,3,6\}, \] \[ \{1,4,5\}, \{ 1,4,6\}, \{1,5,6\}, \{2,3,4\} ,\{2,3,5\}, \{2,4,5\},\] \[ \{2,4,6\}, \{2,5,6\}, \{3,4,5\}, \{3,4,6\}, \{3,5,6\}\}.\] 
It gives rise to the Betti diagram \[\begin{array}{c|ccc}&1&2&3\\\hline 2&4 \\ 3&3&12&6\end{array}\] Here $\{d_1,d_2,d_3\}=\{3,5,6\}$, so $C$ is $2$-MDS, and we see that the part of the resolution consisting of the two rightmost terms (corresponding to $d_2$ and $d_3$) is linear as described.

\item Let $C$ be the linear $[6,3]$-code over $\F{5}$ with generator matrix \[G_7=\begin{pmatrix}1  & 0 & 1 & 1 & 1 & 1 \\0  & 0 & 1 & 2 & 3 & 4 \\0  & 0 & 1 & 4 & 4 & 1\end{pmatrix}.\]Here the maximal independent sets of $M_{G_7}$ are precicely the $10$ subsets of $\{1,2, 3,4,5,6\}$ of cardinality $3$, not containing $2$. Let $H_7$ be any parity check matrix. For the matroid $M_7=M_{H_7}$ we have $E=\{1,2,3,4,5,6\}$ and the maximal independent sets are precisely the $10$ subsets of $E$ of cardinality $3$ containing $2$. Using \cite{BCP} this gives the 
Betti diagram \[\begin{array}{c|ccc}&1&2&3\\\hline 2&10&15&6\end{array}\] Hence the $d_i$ are $3,4,5.$ We see that $d_3=5,$ and not $6,$ since $C$ is degenerate, and  $M_7$ has the isthmus $2$ (loop of $\overline{M_7}$). The resolution is linear, but $M_7$ does not correspond to an MDS-code. The code obtained by truncating the second position is MDS of word length $5$.
\end{itemize}
\end{example}

\begin{example} \label{canonical}
Let $X$ be an algebraic curve of genus $g$ defined over $\Fq$, and embed $X$ into $\Proj^{g-1}$ by use of the complete linear system $L(K)$. Let $\{P_1,\ldots,P_n\}$ be a set of $\Fq$-rational points (of degree $1$) on $X$. We form the matrix $H$ where the $i$-th column consists of  the coordinates of (the image of) $P_i$ for each $i=1,\ldots,n.$ Each column is then determined up to a non-zero multiplicative constant; fix a choice for each $i$. Now we let $H$ be the parity check matrix of a linear code $C$, and let $M$ be the matroid associated to $C$. Different choices of multiplicative constants give equivalent linear codes and therefore the same code parameters, and even the same matroid $M$. If the chosen points fail to span all of  $\Proj^{g-1}$, we replace $H$ with a suitable matrix with fewer rows (that are linear combinations of those in $H$).  Set $D=P_1+\ldots+P_n$. (The code is also code equivalent to the algebraic-geometric code $C(D,D)$ in standard terminology, provided one is able to define such a code properly. As one understands, this is not a strongly algebraic-geometric code $C(D,G)$, since for such a code one demands  $2g-2 < deg(G) < deg(D)$.) Let the ground set $E$ be the set of subdivisors of $D$, corresponding to all subsets of $\{1,2,\ldots,n\}$, representing sets of columns of $H$. Let $A$ be a subdivisor of $D$, and let $r(A)$ be the value at $A$ of the rank function associated to the matroid $M$. It is a consequence of the geometric version of the Riemann-Roch theorem that  \[r(A)=l(K)-l(K-A)=g-h^1(A).\] Moreover, for the nullity $n(A)$, the Riemann-Roch theorem gives   \[n(A)=l(A)-1.\] These rank and nullity functions are described in detail in \cite[Section 5]{D}, which provides the inspiration for this example. 

We define the $t_D$-gonality of $X$ as the minimal degree of a subdivisor $A$ of $D$ such that $l(A)=t+1.$ Hence the $t_D$-gonality of $X$ is the minimal cardinality of a subset $A \subset E$ such that $n(A)=t$, in other words $d_t$. By Theorem~\ref{main}, $t_D=\min\{j : \beta_{t,j} \neq 0\}.$

We also define the $D$ Clifford index $Cl_D(A)$ of a subdivisor $A$ of $E$ as $deg(A)-2(l(A)-1)$. Regarding $A$ as a subset of $\{1,2,\ldots,n\}$ we obtain that this number is $|A|-2n(A)$. The $D$ Clifford index $Cl_D(X)$ of $X$ is $\min\{Cl_D(A)\}$, where $A$ is taken only over those $A$ with $h^0(A) \ge 2$ and $h^1(A) \ge 2$. We have $h^0(A)=n(A)+1 \ge 2$ if and only if $n(A) \ge 1$, and $h^1(A)=l(K-A)=l(A)-|A|-1+g= n(A)-|A|+g \ge 2$ if and only if $|A| \le n(A)+g-2$. This, in combination with Theorem~\ref{main}, gives \[Cl_D(X) = \min \{d_i-2i : i \ge 1; j \le g-2+i \}. \] Hence these kinds of Clifford indices can be read off from these kinds of  Betti numbers. The most interesting case is perhaps when one lets $D$ be the sum of all $\Fq$-rational points of $X$ (and the rank of the matroid is typically $g$ then). Then the $t_D$ and $Cl_D$ are close to being the usual $t$-gonality and Clifford index of $X$ restricted to $\Fq$. But these usual definitions also include divisors with repeated points.

Another example is $D=K$ as in \cite[Section 5]{D}. Then $M$ is a self dual matroid of rank $g-1$ (so $H$ will have to be processed a little to be a parity check matrix of the code). In \cite{D} one shows that $Cl_K(A) \ge 0$ for all $A$, using only properties of matroids.   

Since the $t_D$-gonalities thus have natural generalizations to all finite matroids and linear codes in form of the $d_t$, one might ask if the Clifford index also has such a generalization. Since $r=g$ for the particular matroid above (assuming the images of the points of $\textrm{Supp}(D)$ span $\Proj^{g-1}$), one might define $Cl(M)$ of any matroid $M$ as \[Cl(M) = \min \{d_i-2i : i \ge 1; j \le r-2+i \}. \] This is, however, only defined if the set we are taking the minimum over, is non-empty, and this happens if and only if $d \le r-1$. The Singleton bound only gives  $d \le r+1$. Hence this is not defined for MDS-codes (uniform matroids, $d=r+1$), and almost-MDS-codes (almost-uniform matroids, $d=r$). It is unclear to us whether such an Clifford index says something useful and/or interesting about linear codes or matroids, and in that case, if its definition can be relaxed to apply to MDS-codes and almost-MDS codes also. In general such a $Cl(M)$ can be negative. (Take a linear code with only zeroes in $2$ positions, but MDS after these two positions have been truncated. Then $d=r-1$, and for any parity check matrix $H$ we see that $C(M_H)$ is computed by $d_{n-r}-2(n-r)=2r-n-2$, which is negative for some $r,n$.)
\end{example}

%%%%%%%%%%%%%%%%%%%%%%%%%%%%%%%%%%%%%%%%%%%%%%%%%%%%%%%%%%%%%%%%%%%%%%%%%%%%%%%%%%%%%%%%%%%%%%%%%%%%%%%%%%%%
%
%  ON RESUME LE TOUT AVEC REFERENCE AUX THEOREMES ET CONTRE-EXEMPLES
%
%%%%%%%%%%%%%%%%%%%%%%%%%%%%%%%%%%%%%%%%%%%%%%%%%%%%%%%%%%%%%%%%%%%%%%%%%%%%%%%%%%%%%%%%%%%%%%%%%%%%%%%%%%%

\section{Conclusion}
 Summing up we can roughly say that there are $12$ sets of Betti-numbers that we have considered in this paper:
The three sets of  $\N^{E}$-graded, $\N$-graded, and  global Betti numbers, for each of the four simpicial complexes
$M, \overline M, M^\star$, and $({\overline M})^\star$.
Two of these sets, the  $\N^{E}$-graded ones for $M, \overline{M}$ determine $M$, and therefore the weight hierarchy  in a trivial way, since $\beta_{1,\sigma} \neq 0$ if and only if $\sigma$ is a circuit of the matroid in question.

Likewise two other sets, the  $\N^{E}$-graded ones for $M^\star, ({\overline M})^\star$ determine $M$, and therefore the weight hierarchy in a trivial way, since $\beta_{1,\sigma} \neq 0$ if and only if $\sigma$ is a basis of the matroid dual of the matroid in question.

Two other sets,   $\N$-graded ones for $M, \overline M$ determine the $d_i$ in (what we dare to consider) a non-trivial way, and this is the main result of our paper (Theorem~\ref{main}).

The two sets of global Betti numbers for  $M, \overline M$ do not in general determine the $d_i$, since we have presented examples of pairs of codes with different sets of $d_i$, but the same Betti numbers (Example~\ref{rque44}).

The two sets of  $\N$-graded Betti-numbers for the Alexander duals $M^\star, ({\overline M})^\star $ do not in general determine the $d_i$, since we have presented examples of pairs of codes with different sets of $d_i$, but the same sets of Betti numbers (Example~\ref{exa51}).

The two sets of  global Betti-numbers for the Alexander duals $M^\star, ({\overline M})^\star$ are the same as the sets of $\N$-graded ones (since the resolutions are linear), so the same conclusion applies to them.

Finally, Example~\ref{exa52} and Example~\ref{counter} show that the weight hierarchy does not always decide the Betti numbers.

%%%%%%%%%%%%%%%%%%%%%%%%%%%%%%%%%%%%%%%%%%%%%%%%%%%%%%%%%%%%%%%%%%%%%%%%%%%%%%%%%%%%%%%%%%%%%%%%%%%%%%%%%%%%
%
%  REMERCIEMENTS ET BIBLIOGRAPHIE
%
%%%%%%%%%%%%%%%%%%%%%%%%%%%%%%%%%%%%%%%%%%%%%%%%%%%%%%%%%%%%%%%%%%%%%%%%%%%%%%%%%%%%%%%%%%%%%%%%%%%%%%%%%%%

\subsubsection*{acknowledgements}
Trygve Johnsen is grateful to Institut Mittag-Leffler where he stayed during part of this work.\\
The authors are grateful to the referees and editor for their suggestions for improving the original paper.

\end{document}